\documentclass{article}

\usepackage[english]{babel}
\usepackage[letterpaper,top=2cm,bottom=2cm,left=3cm,right=3cm,marginparwidth=1.75cm]{geometry}

\usepackage{blindtext}
\usepackage{titlesec}
\usepackage{amsmath}
\usepackage{amssymb}
\usepackage{amsfonts}
\usepackage{graphicx}
\usepackage[colorlinks=true, allcolors=blue]{hyperref}
\usepackage{bbm}
\usepackage{mathabx}
\usepackage{amsthm}
\usepackage{tikz} 
\usepackage{mathtools}
\usepackage{hyperref}
\usepackage[capitalise]{cleveref}
\usepackage{animate}
\usepackage{xcolor}
\usepackage{graphicx}
\usepackage{xparse}
\usepackage{enumitem}
\usepackage{titlesec}

\titleformat*{\section}{\LARGE\bfseries}
\titleformat*{\subsection}{\Large\bfseries}

\newtheorem{theorem}{Theorem}
\newtheorem*{theorem*}{Theorem}
\newtheorem{lemma}[theorem]{Lemma}

\theoremstyle{definition}
\newtheorem{definition}[theorem]{Definition}

\newtheorem*{remark*}{Remark}

\DeclareMathOperator{\Cov}{Cov}

\newcommand{\norm}[1]{\lVert #1 \rVert}

\ExplSyntaxOn
\NewDocumentCommand{\ctriangle}{O{1} m m m}{
  \tl_set:Nn \l_tmpa_tl { \tl_trim_spaces:n {#2} }
  \tl_set:Nn \l_tmpb_tl { \tl_trim_spaces:n {#3} }
  \tl_set:Nn \l_tmpc_tl { \tl_trim_spaces:n {#4} }
  \tikz[baseline=(A), scale=#1]{
    \coordinate (A) at (0,0);
    \coordinate (B) at (1cm,0);
    \coordinate (C) at (0.5cm,0.866cm);

    \draw[\l_tmpa_tl, thick] (A) -- (B);
    \draw[\l_tmpb_tl, thick] (B) -- (C);
    \draw[\l_tmpc_tl, thick] (C) -- (A);

    \fill (A) circle (7pt);
    \fill (B) circle (7pt);
    \fill (C) circle (7pt);
  }
}
\ExplSyntaxOff

\newcommand{\cedge}[2][1]{%
  \tikz[baseline=-0.5ex,scale=#1]{%
    \draw[#2, thick] (0,0) -- (0,1cm);
    \filldraw[fill=black, draw=#2, thick] (0,0) circle[radius=7pt];
    \filldraw[fill=black, draw=#2, thick] (0,1cm) circle[radius=7pt];
     }%
}

\title{Exact Realization of $\{\cedge[0.2]{black},\ctriangle[0.3]{white}{white}{white},\ctriangle[0.3]{black}{white}{white},\ctriangle[0.3]{white}{black}{black},\ctriangle[0.3]{black}{black}{black}\}$ Induced Subgraph Counts}
\author{Hillel Schein}

\begin{document}
\pagenumbering{roman}

\maketitle
\begin{abstract}
The study of three-vertex subgraph densities has developed along two complementary lines: exact finite realization in large graphs of a given order, and asymptotic approximate realization by sequences of graphs. We bridge these viewpoints in the case of the graphs $\{\cedge[0.15]{black},\ctriangle[0.2]{white}{white}{white},\ctriangle[0.2]{black}{white}{white},\ctriangle[0.2]{white}{black}{black},\ctriangle[0.2]{black}{black}{black}\}$. By analyzing local modifications of finite samples from graphons, we show that, for any nonempty subset of these densities, every asymptotically feasible target vector lying a fixed positive distance from the boundary of the corresponding feasible region is realized exactly by a graph of every sufficiently large order, subject only to the natural arithmetic constraints. The result holds for every nonempty projection onto any subset of induced three-vertex densities and the edge density. Thus, in the interior of these feasible regions, asymptotic feasibility implies exact finite realizability up to the unavoidable arithmetic obstructions.
\end{abstract}

\section{Introduction}

Given finite graphs $H_{1},\dots,H_{k}$, which prescribed counts of these graphs can occur simultaneously in a large graph? Research on this topic has developed in two seemingly separate directions. One direction studies whether the corresponding densities can be approached by
graphs whose orders tend to infinity (see~\cite{razborov2008minimal,huang20143,glebov2016densities}). The other asks whether large graphs of a specified order have precisely the prescribed counts (see~\cite{janson1992probabilistic,sah2024local,kittipassorn2025many}).

In this work, we bridge these two directions for the edge and the four three-vertex graphs $\{\cedge[0.15]{black},\ctriangle[0.2]{white}{white}{white},\ctriangle[0.2]{black}{white}{white},\ctriangle[0.2]{white}{black}{black},\ctriangle[0.2]{black}{black}{black}\}$. We prove that, for induced counts of these graphs, explicit arithmetic conditions
characterize which prescribed counts are realized exactly in sufficiently large graphs throughout the interior of every corresponding feasible region. The result allows any subset of
the counts to be prescribed. For each fixed distance from the boundary, the threshold on the number of vertices is uniform over all target density vectors. Thus, a known asymptotic feasible region determines exactly which
admissible density vectors in its interior are realized at every sufficiently large order.

For finite simple graphs $H,G$, write $v(H)=|V(H)|$ and define the induced count of $H$ in $G$ by
$$\#H(G)=\left|
\left\{
S\subseteq V(G): |S|=v(H),\ G[S]\simeq H
\right\}
\right|,$$
where $G[S]$ is the subgraph induced by $S$. The induced density of $H$ in $G$ is $\binom{v(G)}{v(H)}^{-1}\#H(G)$.

Let $\mathcal{H}$ be an ordered selection of $\{\cedge[0.15]{black},\ctriangle[0.2]{white}{white}{white},\ctriangle[0.2]{black}{white}{white},\ctriangle[0.2]{white}{black}{black},\ctriangle[0.2]{black}{black}{black}\}$. Given $\delta>0$, we say that $v\in (0,1)^{|\mathcal{H}|}$ is $\delta,\mathcal{H}$-interior if
for every $u$ in the Euclidean ball $B(v,\delta)$ there is a sequence of finite graphs $(G_n)$ such that
$$\left(\frac{\#H(G_{n})}{\binom{v(G_{n})}{v(H)}}\right)_{H\in\mathcal{H}}\rightarrow u,\qquad v(G_{n})\rightarrow\infty.$$

\begin{theorem}[Exact realization]\label{thm:main}
Let $\delta>0$, and let $\mathcal{H}\subset\{\cedge[0.15]{black},\ctriangle[0.2]{white}{white}{white},\ctriangle[0.2]{black}{white}{white},\ctriangle[0.2]{white}{black}{black},\ctriangle[0.2]{black}{black}{black}\}$ with $1\leq|\mathcal{H}|\leq 3$. There exists $N_{\delta,\mathcal{H}}$ such that for any $n>N_{\delta,\mathcal{H}}$ and any $\delta,\mathcal{H}$-interior vector $v$, the following are equivalent:
\begin{itemize}
     \item There exists a graph $G$ on $n$ vertices such that
    $$\#H(G)=\binom{n}{v(H)}v_{H},\qquad H\in\mathcal{H}.$$
    \item There exist nonnegative integers $N_{\cedge[0.1]{black}},N_{\ctriangle[0.15]{white}{white}{white}},N_{\ctriangle[0.15]{black}{white}{white}},N_{\ctriangle[0.15]{white}{black}{black}},N_{\ctriangle[0.15]{black}{black}{black}}$ satisfying
$$N_{H}=\binom{n}{v(H)}v_{H},\qquad H\in\mathcal{H},$$
$$N_{\ctriangle[0.15]{white}{white}{white}}+N_{\ctriangle[0.15]{black}{white}{white}}+N_{\ctriangle[0.15]{white}{black}{black}}+N_{\ctriangle[0.15]{black}{black}{black}}=\binom{n}{3},\qquad (n-2)N_{\cedge[0.1]{black}}=N_{\ctriangle[0.15]{black}{white}{white}}+2N_{\ctriangle[0.15]{white}{black}{black}}+3N_{\ctriangle[0.15]{black}{black}{black}}.$$
\end{itemize}

\end{theorem}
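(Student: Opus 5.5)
The forward direction is immediate. If $G$ realizes the counts, set $N_H=\#H(G)$ for all five graphs. Every triple induces exactly one of the four three-vertex graphs, which gives the first identity. Double counting pairs (edge, third vertex) gives the second: each edge lies in $n-2$ triples, and a triple of type $\ctriangle[0.15]{black}{white}{white}$, $\ctriangle[0.15]{white}{black}{black}$, $\ctriangle[0.15]{black}{black}{black}$ contains $1,2,3$ edges respectively. All the work is in the converse. Given admissible integers $N_H$ ($H\in\mathcal{H}$), we must produce an $n$-vertex graph hitting exactly these values.

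The plan is to approximate and then correct. First fix the target densities $v$, which are $\delta$-interior. Because $|\mathcal{H}|\le 3$ while the full space of (edge, three-vertex) statistics has more freedom, we can choose an auxiliary point $u\in B(v,\delta/2)$ and, by the interior hypothesis and compactness of the graphon space, a graphon $W$ realizing the densities at $u$. Uniformity is obtained by compactness over the set of $\delta$-interior vectors. We can also choose $W$ with room to move: for a small perturbation $u'$ of $u$ we still have a graphon. Next we sample a $W$-random graph (or take a blow-up/discretization) on $n$ vertices. Its counts equal $\binom{n}{v(H)}u_H+O(n^{v(H)-1/2})$, and by choosing $u$ appropriately we can move the sample so that the errors $N_H-\#H(G_0)$ are $O(n^{v(H)-1/2})$, or even $O(n^{v(H)-1})$ after a coarse adjustment. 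For the coarse adjustment we would use the $\delta/2$ slack: varying $u$ in a small ball around $v$ moves the counts continuously over a neighborhood of size $\Theta(\delta n^{v(H)})$, and a discrete intermediate-value/degree argument lands within the lattice spacing of a single vertex-rewiring step.

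The fine correction uses local modifications, namely single edge flips and small gadgets on a few vertices. Flipping edge $xy$ changes $\#\cedge[0.1]{black}$ by $\pm1$. It changes each triangle-type count by quantities determined by the codegree statistics of $x,y$: the numbers of common neighbours, private neighbours and common non-neighbours. In a typical sample from a non-degenerate graphon these quantities take $\Theta(n)$ different values spread over an interval. The main step is to show that the vectors of count changes produced by flips (and compositions of a bounded number of flips on disjoint pairs) generate, with nonnegative bounded-size combinations, every vector in the lattice cut out by the two linear constraints of the theorem, within a box of size $O(n^{v(H)-1})$. Concretely, for $\mathcal{H}$ consisting of one triangle type, flip changes of sizes $c$ and $c+1$ are both available, so any residue can be reached greedily. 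For two or three coordinates we need flip-change vectors whose differences span the constrained lattice. Interiority ensures that the graphon is not extremal, so codegree profiles vary in all independent directions, and the change vectors therefore span a full-dimensional cone.

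The main obstacle is this last step when $|\mathcal{H}|=3$. An example is $\mathcal{H}$ consisting of three triangle types, so the fourth is forced and the divisibility relation $(n-2)N_{\cedge[0.1]{black}}=\dots$ constrains the edge count modulo $n-2$. Here one must show that the available local moves generate exactly the admissible sublattice, no smaller, and that enough disjoint moves exist so they do not interfere. Interference between two flips is $O(1)$ per shared vertex and can be avoided by using vertex-disjoint pairs. I would first try to prove a lemma that pairs of flips $(xy \text{ added}, x'y' \text{ removed})$, where $x,y,x',y'$ are chosen with prescribed codegree differences, realize the unit vectors of the constrained lattice. Any target within distance $O(n^{2})$ would then be reached by $O(n)$ such paired moves. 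This would be combined with a second-moment concentration argument guaranteeing the needed codegree configurations exist in the sample, uniformly in $v$.
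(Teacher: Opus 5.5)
The forward direction is fine. The converse has a genuine gap at its foundation: interiority of $v$ does not make an arbitrary representative graphon non-degenerate, as you assert. Two examples:
\begin{itemize}
\item $W\equiv p$ realizes $(p,p^3)$, an interior point of the edge--triangle region for $0<p<1$. Yet its edge response $D_W(x,y)=(1,\,d_W(x)+d_W(y),\,3c_W(x,y))=(1,2p,3p^2)$ is constant.
\item The complete tripartite graphon with parts $\tfrac12,\tfrac14,\tfrac14$ realizes an interior point, but its samples have no edge randomness at all.
\end{itemize}
With such a representative, flips move the counts at order $n$ along only one direction. An error of order $n^2$ in the other directions (your own post-coarse estimate) can then only be removed through $O(\sqrt n)$ fluctuations, i.e.\ by $\gg n$ necessarily overlapping moves.

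The paper has to \emph{construct} the representative. The map $F_\eta$ forces $\eta\le W\le1-\eta$ and $\int|u\cdot D_W|\ge\eta^2/9$ for every unit $u$. The input $U$ is then re-chosen so that $F_\eta U$ still has exactly the prescribed densities: intermediate values for one density, inversion of the triangular affine density map for three, and winding-number and level-set arguments for two. Your coarse step of varying $u$ with a discrete degree argument does not replace this. For a fixed sample the counts are not continuous in $u$. Even under a coupling you would need a continuous family of graphons whose densities cover a neighbourhood of $v$ with nonzero degree, which is exactly the topological input the paper has to supply.

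The bookkeeping of your correction stage also does not close. If the paired moves realize unit vectors, $O(n)$ of them cannot cover a distance of order $n^2$. If they are meant to realize prescribed increments of order $n$, that needs codegrees varying at order $n$ in every direction, which is the non-degeneracy above. In either case, vertex-disjointness caps the number of non-interacting moves at about $n/4$. What is needed is several scales, with interference controlled differently at each:
\begin{itemize}
\item The paper greedily toggles $O(n^{3/2})$ edges to reach $O(n)$ error. It uses McDiarmid over a net for the directional supply, and the deterministic fact that each toggle moves the supply average by only $O(n^{-2})$.
\item It fixes the edge count exactly with $O(1)$ flips.
\item From then on it uses only edge-preserving swaps. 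Only the empty-triple and triangle counts remain, with an arbitrary target in $\mathbb{Z}^2$, so your sublattice worry is not the real obstacle.
\end{itemize}

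The lemma you leave open, that swaps with exactly prescribed increments exist, is really an anticoncentration statement. A swap increment is a sum of about $n$ independent vectors in $\{-1,0,1\}^2$, with atoms $(0,0),(1,0),(0,1)$ of mass at least $\eta_\delta^4$. A two-dimensional local limit theorem then gives every lattice point within $O(\sqrt n)$ of the mean probability $\Theta(1/n)$. The paper uses this in two steps. First, $O(\sqrt n)$ vertex-disjoint swaps of size $\Theta(\sqrt n)$ bring the residual into an $O(\sqrt n)$ box. Then two swaps finish it, drawn from two families of about $n^{0.6}$ disjoint candidates: the first supplies many distinct increments and the second completes one of them, so every residual in the box is hit simultaneously.

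Finally, when $|\mathcal H|<3$ you must choose integer targets for the free counts close to the expected counts, because the given completion may be far away. This is the paper's arithmetic completion lemma.
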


The two arithmetic conditions originate from identities of subgraph counts. Every three-element vertex
set induces exactly one of the four three-vertex graphs, giving the first identity. Counting incidences between edges and three-element vertex sets gives the second, since every edge lies in $n-2$ such sets. Any three of the five counts determine the other two, so three prescribed coordinates suffice to specify
the entire profile.

For fewer prescribed coordinates, interiority is required only in their projected feasible region. The
integers $N_{\cedge[0.1]{black}},N_{\ctriangle[0.15]{white}{white}{white}},N_{\ctriangle[0.15]{black}{white}{white}},N_{\ctriangle[0.15]{white}{black}{black}},N_{\ctriangle[0.15]{black}{black}{black}}$ are a completion of those coordinates, not further prescribed targets. The threshold $N_{\delta,\mathcal{H}}$
is common to all eligible vectors $v$; in particular, the targets may vary with $n$.
We also remark that tracking the constants in the proof gives a polynomial bound in $\delta^{-1}$, but we do not
record or optimize the resulting exponent.

We illustrate \cref{thm:main} by applying it to an existing asymptotic result. Consider the asymptotically feasible region of the $\cedge[0.15]{black}$ and $\ctriangle[0.2]{black}{black}{black}$ densities, shown in \cref{fig:edge-triangle-region}.
\begin{figure}[h]
\centering
\includegraphics[width=0.4\textwidth]{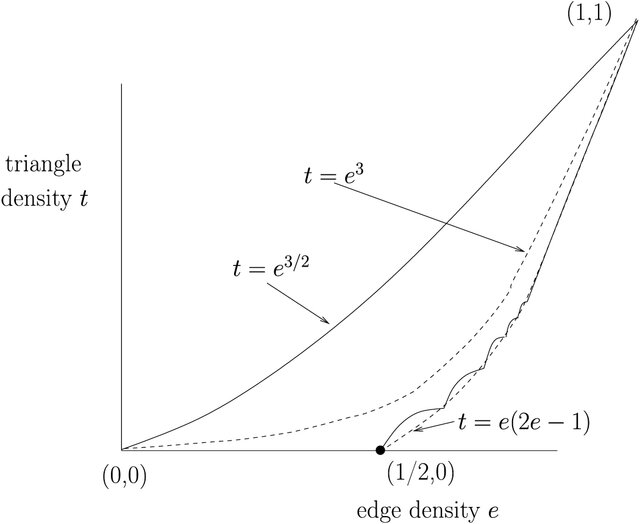}
\caption{Asymptotically feasible region of the $\cedge[0.2]{black}$ and $\ctriangle[0.3]{black}{black}{black}$ densities. Image from~\cite{radin2013phase}.}
\label{fig:edge-triangle-region}
\end{figure}

The upper bound is classical, and the lower bound was proved by Razborov in~\cite{razborov2008minimal}. All points in the interior of the region can be shown to be asymptotically feasible using graph limits (see~\cite{lovasz2012large}). In this case, the arithmetic conditions impose no constraint other than integrality. Thus, \cref{thm:main} implies that for any $\delta>0$ there exists $N_{\delta}$ such that, for any $n>N_{\delta}$ and any point $(a,b)$ in the region at distance at least $\delta$ from its boundary satisfying $\binom{n}{2}a,\binom{n}{3}b\in\mathbb{N}$, there exists a graph $G$ of order $n$ with $\cedge[0.15]{black}$ and $\ctriangle[0.2]{black}{black}{black}$ densities precisely $(a,b)$.

Similar applications give exact realizability for prescribed interior densities of any such subset $\mathcal{H}\subset\{\cedge[0.15]{black},\ctriangle[0.2]{white}{white}{white},\ctriangle[0.2]{black}{white}{white},\ctriangle[0.2]{white}{black}{black},\ctriangle[0.2]{black}{black}{black}\}$. The asymptotically feasible regions for the pairs $(\ctriangle[0.2]{white}{white}{white},\ctriangle[0.2]{black}{white}{white}),(\ctriangle[0.2]{white}{white}{white},\ctriangle[0.2]{black}{black}{black}),(\ctriangle[0.2]{black}{white}{white},\ctriangle[0.2]{white}{black}{black}),(\ctriangle[0.2]{white}{black}{black},\ctriangle[0.2]{black}{black}{black})$ were characterized in~\cite{huang20143,glebov2016densities}; applying \cref{thm:main} gives the corresponding exact realization results in their interiors.

We prove \cref{thm:main} using graphons, the graph limits used to study asymptotic feasibility of these densities (see~\cite{lovasz2006limits}). Graphons define a family of random graph models whose subgraph densities approximate all feasible density vectors. We prove that, with positive probability, a graph sampled from a suitable graphon can be modified to have the prescribed densities. This proof method generalizes the algorithmic probabilistic construction in~\cite{janson1992probabilistic}, which produces finite graphs whose three-vertex densities equal the corresponding expectations in the Erdős–Rényi random graph model $G(n,p)$. In this model, each edge is present independently with probability $p$.

However, unlike $G(n,p)$, the random graph model associated with an arbitrary graphon need not allow controlled modifications of subgraph densities with positive probability. To resolve this, we prove that every interior target density vector has a graphon representative with the required properties. To analyze how modifications affect the chosen subgraph densities, we apply results from the theory of generalized $U$-statistics~\cite{janson1991asymptotic}, decomposing each density and making specialized modifications at different scales.

We introduce the tools from graph limits and generalized $U$-statistics in \cref{sec:prelim}, prove the existence of a suitable random model in \cref{sec:robust-representative}, and present the algorithmic modifications and prove their availability with positive probability in \cref{sec:first,sec:paired,sec:coarse,sec:precise}. Finally, in \cref{sec:proof}, a union bound shows that all required modifications are simultaneously available with positive probability, yielding a graph with the prescribed densities.

\section{Preliminaries}\label{sec:prelim}
\subsection{Graphs and Graphons}
Throughout the paper, graphs are simple and undirected: $G=(V,E)$, with $E\subseteq\binom{V}{2}$. We write $V(G)$ and $E(G)$ for the vertex and edge sets of $G$, and set $v(G)=|V(G)|$ and $e(G)=|E(G)|$. We sometimes omit the underlying graph $G$ when it is clear. As in the introduction, for finite graphs $H,G$ we define the induced count of $H$ in $G$ by
$$\#H(G)=\left|
\left\{
S\subseteq V(G): |S|=v(H),\ G[S]\simeq H
\right\}
\right|.$$

We now introduce graphons (see~\cite{lovasz2006limits} for an overview). A graphon is a symmetric measurable function $W:[0,1]^2\to[0,1]$ that represents a limit of dense graphs through their subgraph densities. For a graphon $W$ and a finite graph $H$, put $k=v(H)$ and label its vertices by $[k]=\{1,\ldots,k\}$. We define the density $t(H,W)$ and the labelled induced density $t_{\mathrm{ind}}(H,W)$ by
\[
t(H,W)
=
\int_{[0,1]^k}
\prod_{ij\in E(H)}W(x_i,x_j)\,d\mathbf x,
\]
\[
t_{\mathrm{ind}}(H,W)
=
\int_{[0,1]^k}
\prod_{ij\in E(H)}W(x_i,x_j)
\prod_{ij\in\binom{[k]}2\setminus E(H)}
\bigl(1-W(x_i,x_j)\bigr)\,d\mathbf x.
\]

The asymptotically feasible density vectors of finite graphs are precisely those represented by graphons~\cite{lovasz2006limits}, with the conversion between labelled and unlabelled induced densities specified below. Inclusion--exclusion expresses induced densities as linear combinations of ordinary densities, and conversely. For three-vertex graphs, we have
$$t_{\mathrm{ind}}(\ctriangle[0.3]{black}{black}{black},W)=t(\ctriangle[0.3]{black}{black}{black},W),\quad t_{\mathrm{ind}}(\ctriangle[0.3]{white}{black}{black},W)=t(\ctriangle[0.3]{white}{black}{black},W)-t(\ctriangle[0.3]{black}{black}{black},W),$$
$$t_{\mathrm{ind}}(\ctriangle[0.3]{black}{white}{white},W)=t(\ctriangle[0.3]{black}{white}{white},W)-2t(\ctriangle[0.3]{white}{black}{black},W)+t(\ctriangle[0.3]{black}{black}{black},W),\quad t_{\mathrm{ind}}(\ctriangle[0.3]{white}{white}{white},W)=1-3t(\ctriangle[0.3]{black}{white}{white},W)+3t(\ctriangle[0.3]{white}{black}{black},W)-t(\ctriangle[0.3]{black}{black}{black},W).$$
This transformation is triangular and invertible. Thus the four induced three-vertex densities, which also determine the edge density, carry the same information as the corresponding ordinary densities. Since $t(\ctriangle[0.3]{white}{white}{white},W)\equiv 1$ and $t(\ctriangle[0.3]{black}{white}{white},W)=t(\cedge[0.15]{black},W)$, it suffices to consider the edge, $2$-star and triangle densities. We use these coordinates in \cref{sec:robust-representative,sec:first}.

These integrals have a probabilistic interpretation. Sample $v(H)$ independent uniform points in $[0,1]$ and, conditional on these points, include each edge $ij$ independently with probability $W(x_i,x_j)$. Then $t(H,W)$ is the probability that all edges of a fixed labelled copy of $H$ are present, and $t_{\mathrm{ind}}(H,W)$ is the probability that the sampled graph is exactly that copy. The probability that the sampled graph is isomorphic to $H$ is $\frac{v(H)!}{|\operatorname{Aut}(H)|}t_{\mathrm{ind}}(H,W)$, where $\operatorname{Aut}(H)$ is the automorphism group of $H$. This factor is needed because $\#H(G)$ counts vertex sets rather than labellings.

The same construction gives a distribution on graphs of order $n$ for every $n\in\mathbb{N}$ and graphon $W$.

\begin{definition}
    \label{def:W-random-graph}
    Let $W:[0,1]^{2}\rightarrow[0,1]$ be a graphon, and let $n\in\mathbb{N}$. Sample mutually independent random variables $(X_{i})_{i\in[n]}$ and $(Y_{ij})_{1\leq i<j\leq n}$, all uniformly distributed on $[0,1]$. Define the $W$-random graph $W_{n}$ on $[n]$ by
    $$ij\in E(W_{n})\iff Y_{ij}\leq W(X_{i},X_{j}),\qquad 1\leq i<j\leq n.$$
\end{definition}
This standard sampling model has been studied extensively (see~\cite{lovasz2006limits,bhattacharya2023fluctuations,zhang2022berry}). The following identity relates expected induced counts in the sample to graphon densities.
\begin{lemma}
    Let $W:[0,1]^{2}\rightarrow[0,1]$ be a graphon, let $H$ be a finite graph with $k=v(H)$, and let $n\in\mathbb{N}$. Then
    \[
\mathbb E\bigl[\#H(W_n)\bigr]
=
\binom nk\frac{k!}{|\operatorname{Aut}(H)|}\,
t_{\mathrm{ind}}(H,W).
\]
\end{lemma}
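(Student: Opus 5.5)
We prove the identity by linearity of expectation, reducing it to a single $k$-element vertex set. Write
\[
\#H(W_n)=\sum_{S\in\binom{[n]}{k}}\mathbbm{1}\{W_n[S]\simeq H\}.
\]
Taking expectations gives
\[
\mathbb E\bigl[\#H(W_n)\bigr]=\sum_{S}\Pr\bigl(W_n[S]\simeq H\bigr).
\]
The variables $(X_i)$ are i.i.d., and the $(Y_{ij})$ are i.i.d. and independent of them. The joint law of $\bigl((X_i)_{i\in S},(Y_{ij})_{ij\in\binom S2}\bigr)$ is therefore the same for every $S$ after relabelling $S$ as $[k]$ in increasing order. Hence each summand equals $\Pr(W_k\simeq H)$, and there are $\binom nk$ of them. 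It remains to show
\[
\Pr(W_k\simeq H)=\frac{k!}{|\operatorname{Aut}(H)|}\,t_{\mathrm{ind}}(H,W).
\]

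For this, fix the labelling of $H$ on $[k]$ used in the definition of $t_{\mathrm{ind}}$. The event $\{W_k\simeq H\}$ is the disjoint union, over the distinct labelled graphs $H'$ on $[k]$ isomorphic to $H$, of the events $\{E(W_k)=E(H')\}$. These labelled copies correspond to the cosets of $\operatorname{Aut}(H)$ in $S_k$, so there are exactly $k!/|\operatorname{Aut}(H)|$ of them.

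For a fixed $H'$, condition on $X=(X_1,\dots,X_k)$. Given $X$, the edges $ij$ are present independently with probability $W(X_i,X_j)$, since $Y_{ij}$ is uniform and independent of $X$. Therefore
\[
\Pr\bigl(E(W_k)=E(H')\mid X\bigr)=\prod_{ij\in E(H')}W(X_i,X_j)\prod_{ij\notin E(H')}\bigl(1-W(X_i,X_j)\bigr).
\]
Integrating over $X\in[0,1]^k$ gives $t_{\mathrm{ind}}(H',W)$, by Fubini, which applies because the integrand is bounded and measurable.

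Next we check that $t_{\mathrm{ind}}(H',W)=t_{\mathrm{ind}}(H,W)$. Write $H'=\sigma(H)$ for a permutation $\sigma$. The change of variables $x\mapsto(x_{\sigma(1)},\dots,x_{\sigma(k)})$ preserves Lebesgue measure on $[0,1]^k$. Because $W$ is symmetric, unordered pairs are mapped consistently, so this change of variables carries the integrand for $H'$ to the integrand for $H$.

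Summing over the $k!/|\operatorname{Aut}(H)|$ labelled copies gives the displayed identity, and multiplying by $\binom nk$ proves the lemma. Nothing here is a genuine obstacle. The only points needing care are the orbit–stabilizer count of labelled copies, and the use of the symmetry of $W$ in the relabelling invariance.
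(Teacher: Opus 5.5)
Your proof is correct and follows the same route as the paper: linearity of expectation over the $\binom{n}{k}$ vertex sets, with each set inducing a copy of $H$ with probability $\frac{k!}{|\operatorname{Aut}(H)|}\,t_{\mathrm{ind}}(H,W)$. The paper asserts this per-set probability directly from its preceding probabilistic interpretation of $t_{\mathrm{ind}}$, whereas you justify it explicitly via the orbit--stabilizer count of labelled copies, conditioning on the vertex variables, and the relabelling invariance of $t_{\mathrm{ind}}$ (which uses the symmetry of $W$).
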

\begin{proof}
Each $k$-element vertex set has probability
\[
\frac{k!}{|\operatorname{Aut}(H)|}\,t_{\mathrm{ind}}(H,W)
\]
of inducing a graph isomorphic to $H$. Summing over these sets
proves the identity.
\end{proof}

\subsection{Generalized $U$-statistics}

Generalized $U$-statistics include subgraph counts and densities of graphon samples. They have been studied both in this setting and more generally (see~\cite{janson1991asymptotic,zhang2022berry,bhattacharya2023fluctuations}). We first give their definition.
\begin{definition}\label{def:generalized-u-statistics}
Let $n\in\mathbb{N}$, and let $(X_i)_{i\in[n]}$ and $(Y_{ij})_{1\leq i<j\leq n}$ be two independent families of real-valued random variables, each consisting of i.i.d. variables. Set
    $Y_{ji}=Y_{ij}$. Let $k\in\mathbb{N}$ satisfy $k\leq n$, and let $h:\mathbb{R}^{k+\binom{k}{2}}\rightarrow\mathbb{R}$ be measurable. Define the generalized $U$-statistic $S_{n}(h)$ and its normalization $U_{n}(h)$ by
\[
S_n(h)
=
\sum_{\varphi:[k]\hookrightarrow[n]}h(\varphi),\quad U_{n}(h)=\frac{1}{(n)_{k}}S_{n}(h),\quad h(\varphi)=
h\bigl(
(X_{\varphi(i)})_{i\in[k]},
(Y_{\varphi(i)\varphi(j)})_{i<j}
\bigr),
\]
where $(n)_{k}=\prod_{i=0}^{k-1}(n-i)$, with $(n)_0=1$.
\end{definition}

In the $W$-random graph model $W_{n}$, let $h$ be the indicator that the sampled labelled graph on $[k]$ is a fixed graph $H$. Then
\[
S_n(h)=|\operatorname{Aut}(H)|\,\#H(W_n),\qquad
\mathbb E[U_n(h)]=t_{\mathrm{ind}}(H,W).
\]
If $h$ only requires the edges of $H$ to be present, then $U_n(h)$ is the normalized injective density $t(H,W_n)$ defined at the start of \cref{sec:robust-representative}.

Generalized $U$-statistics admit an orthogonal decomposition whose components are indexed by graphs~\cite{janson1991asymptotic}. We give a proof and record explicitly the components on at most two vertices.

\begin{lemma}[Orthogonal decomposition]
\label{lem:orthogonal-decomposition}
Let $n,k\in\mathbb{N}$ with $k\leq n$, and let $h:\mathbb{R}^{k+\binom{k}{2}}\rightarrow\mathbb{R}$ be a bounded measurable kernel on $k$ vertices, in the setting of \cref{def:generalized-u-statistics}. There exist
bounded symmetric component kernels $h_H$, indexed by all graphs $H\subseteq K_{k}$ up to isomorphism, such that
\[
S_n(h)
=
\sum_{H\subseteq K_{k}}
(n-v(H))_{k-v(H)}
\sum_{\substack{S\subseteq[n]\\|S|=v(H)}}h_H(S).
\]
The outer sum contains one representative of each isomorphism type. Subgraphs need not be induced and may have isolated vertices; $\emptyset$ denotes the graph with no vertices.
Here $h_H(S)$ is evaluated on the vertex and pair variables indexed
by $S$; symmetry makes the evaluation independent of their
ordering.

For $T\subseteq[n]$, let $\mathcal F_T
=
\sigma\bigl(X_i:i\in T;\ Y_{ij}:i,j\in T,\ i<j\bigr)$.

Every nonempty component has mean zero and satisfies
$S\not\subseteq T
\quad\Longrightarrow\quad
\mathbb E[h_H(S)\mid\mathcal F_T]=0$.

Terms corresponding to distinct pairs $(H,S)$ are uncorrelated.
Consequently, for every $\emptyset\neq H$ and every deterministic family
$\mathcal A\subseteq\binom{[n]}{v(H)}$,
\[
\operatorname{Var}\left(\sum_{S\in\mathcal A}h_H(S)\right)
=
|\mathcal A|\,\mathbb E[h_H^2],
\]

Writing $\mu=\mathbb Eh$, the components on at most two vertices have the following explicit forms. The sums over $i\neq j$ are over ordered pairs.
\begin{itemize}
    \item $h_{\emptyset}=\mu$.
    \item $h_{\bullet}(x)
=
\sum_{i=1}^k
\bigl(\mathbb E[h\mid X_i=x]-\mu\bigr)$.
\item $h_{\bullet\hspace{1mm}\bullet}(x,x')
=
\sum_{i\neq j\in[k]}
\left(
\mathbb E[h\mid X_i=x,X_j=x']
-\mathbb E[h\mid X_i=x]
-\mathbb E[h\mid X_j=x']
+\mu
\right)$.
\item $h_{\cedge[0.1]{black}}(x,x',y)
=
\sum_{i\neq j\in[k]}
\left(
\mathbb E[h\mid X_i=x,X_j=x',Y_{ij}=y]
-\mathbb E[h\mid X_i=x,X_j=x']
\right)$.

\end{itemize}
\end{lemma}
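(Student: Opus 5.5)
We will prove the lemma by a Hoeffding-type decomposition. The idea is to expand $h$ over the lattice of sub-$\sigma$-algebras generated by the vertex variables and the pair variables of $[k]$. Index the underlying independent coordinates of a $k$-vertex kernel by the set $\Omega_k=[k]\cup\binom{[k]}{2}$. For a subset $A\subseteq\Omega_k$ that is ``closed'', meaning every pair $ij\in A$ has $i,j\in A$, the vertices and pairs in $A$ form a graph $F_A$ with vertex set $A\cap[k]$ and edge set $A\cap\binom{[k]}2$. Write $\mathcal G_A=\sigma(X_i,Y_{ij}: i\in A, ij\in A)$. We define the Möbius (inclusion–exclusion) components
\[
\pi_A h=\sum_{B\subseteq A,\ B\text{ closed}}(-1)^{|A\setminus B|}\,\mathbb E[h\mid \mathcal G_B].
\]
Summing over all $A$ gives back $h$, since $\Omega_k$ itself is closed and the zeta/Möbius inversion on the lattice of closed sets is exact.

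The key property is degeneracy. We show that $\mathbb E[\pi_A h\mid\mathcal G_C]=0$ whenever $A\not\subseteq C$. By independence of the coordinates, $\mathbb E[\mathbb E[h\mid\mathcal G_B]\mid\mathcal G_C]=\mathbb E[h\mid\mathcal G_{B\cap C}]$. The alternating sum then telescopes to zero once we pick an element $a\in A\setminus C$ and pair each $B$ with $B\triangle\{a\}$. This pairing needs care: removing a vertex $a$ from a closed set must also remove its pairs. So we choose $a$ to be a maximal element of $A\setminus C$, taking a pair if one exists and otherwise a vertex. With this choice the toggle preserves closedness within $A$. This degeneracy immediately yields mean zero for $A\neq\emptyset$ and orthogonality of $\pi_Ah(\varphi)$ and $\pi_{A'}h(\varphi')$ unless their images in $[n]$ coincide. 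The images coincide exactly when the pulled-back closed sets $\varphi(A)$ and $\varphi'(A')$ are equal as subsets of $[n]\cup\binom{[n]}2$.

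Next we regroup. For each embedding $\varphi$ and closed set $A$, the term $\pi_Ah(\varphi)$ depends only on the variables indexed by $\varphi(A)$, which is a labelled copy of $F_A$ inside $[n]$. We group terms by the image vertex set $S=\varphi(A\cap[k])$ and by the isomorphism type $H$ of $F_A$. Here we must make one choice explicit. As the statement prescribes $h_H(S)$ to be indexed by vertex sets $S$ rather than by labelled copies, we define $h_H(S)$ to be the sum, over all closed $A$ with $F_A\cong H$ and all bijections $A\cap[k]\to S$, of $\pi_Ah$ evaluated on the variables of $S$, normalized so that each $S$ is counted once. The number of extensions of a fixed injection $A\cap[k]\hookrightarrow[n]$ to an embedding $[k]\hookrightarrow[n]$ is $(n-v(H))_{k-v(H)}$, which produces the stated prefactor. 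Symmetry of $h_H$ holds because we sum over all labellings, and boundedness follows from the finite sum of conditional expectations of a bounded $h$.

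Two points remain. The first is the uncorrelatedness across distinct $(H,S)$, which we expect to be the main obstacle. For different $H$ on the same $S$, the constituent closed sets differ, so degeneracy kills the covariance. But terms of type $H$ on $S$ are themselves sums of $\pi_A$ over several copies of $H$ in $K_S$. Hence we should really index components by the labelled edge set, not only by $S$. We will check whether the statement intends $H$ up to isomorphism with $h_H(S)$ aggregating all copies. If so, then distinct $(H,S)$ pairs are still orthogonal, because every constituent closed set has vertex set exactly $S$ and isomorphism type $H$, so no two constituent sets from distinct pairs coincide. The variance formula then follows from pairwise orthogonality together with the fact that $\mathbb E[h_H(S)^2]$ is independent of $S$ by exchangeability. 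The second point is the explicit formulas for $v(H)\le2$, which come from evaluating the Möbius sum. For $\emptyset$ the sum is just $\mu$. For a single vertex it is $\mathbb E[h\mid X_i]-\mu$, summed over the $k$ choices of $i$. For two isolated vertices we get the four-term inclusion–exclusion. For the edge, the only closed proper subset containing both vertices is $\{i,j\}$, so the component is $\mathbb E[h\mid X_i,X_j,Y_{ij}]-\mathbb E[h\mid X_i,X_j]$. In each case we sum over ordered pairs $i\neq j$, matching the stated forms.
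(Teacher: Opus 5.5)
There is a genuine error in your central definition, and it breaks both the inversion identity and the degeneracy step. The coefficient $(-1)^{|A\setminus B|}$, summed over \emph{all} closed $B\subseteq A$, is the M\"obius function of the Boolean lattice. It is not the M\"obius function of the lattice of closed sets. Closed sets are exactly the down-sets of the poset on $\Omega_k$ in which $i,j<ij$. In this distributive lattice, $\mu(B,A)=(-1)^{|A\setminus B|}$ only when $A\setminus B$ consists of maximal elements of $A$, namely pairs of $A$ and isolated vertices of $F_A$; otherwise $\mu(B,A)=0$.

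Your formula already fails for $k=2$ and $A=\{1,2,12\}$. It gives
\[
\pi_Ah=h-\mathbb E[h\mid X_1,X_2]+\mathbb E[h\mid X_1]+\mathbb E[h\mid X_2]-\mu .
\]
This has mean $\mu$ rather than $0$, and summing over all closed $A$ gives $h+\mathbb E[h\mid X_1]+\mathbb E[h\mid X_2]-\mu\neq h$. The toggle argument breaks exactly here. If $a=ij$ is a pair and $B$ misses $i$ or $j$, then $B\cup\{ij\}$ is not closed, so those $B$ are left unpaired. Your claim that the toggle preserves closedness within $A$ is therefore false whenever $A\setminus C$ contains a pair. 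Note also that your explicit edge formula at the end omits $B=\{i\},\{j\},\emptyset$. It thus silently uses the correct M\"obius function and contradicts your general definition.

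The repair is to let $\mathcal M(A)$ be the set of maximal elements of $A$ and set
\[
\pi_Ah=\sum_{M\subseteq\mathcal M(A)}(-1)^{|M|}\,\mathbb E\bigl[h\mid\mathcal G_{A\setminus M}\bigr].
\]
Each $A\setminus M$ is closed. Now take $C$ closed with $A\not\subseteq C$. Then $A\setminus C$ is an up-set of $A$, so your maximal $a\in A\setminus C$ lies in $\mathcal M(A)$. Toggling $a$ in $M$ stays within the index set and leaves $(A\setminus M)\cap C$ unchanged, because $a\notin C$. The sum therefore cancels, and since this is genuine M\"obius inversion on the lattice, $\sum_A\pi_Ah=h$.

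The paper sidesteps computing $\mu$ altogether. It defines $q_A=\mathbb E[h\mid\mathcal F_A]-\sum_{B\subsetneq A}q_B$ recursively over labelled subgraphs and proves degeneracy by induction. With either fix, your remaining steps agree with the paper's: the regrouping via $(n-r)_{k-r}$ extensions, orthogonality through distinct image closed sets, the variance identity by exchangeability, and the low-order formulas. No extra normalization is needed in $h_H(S)$, since summing over all $A\simeq H$ and all bijections $A\cap[k]\to S$ already gives the stated identity.
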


\begin{proof}
For a labelled subgraph $A\subseteq K_k$, let $\mathcal F_A
=
\sigma\bigl(X_i:i\in V(A);\ Y_{ij}:ij\in E(A)\bigr)$.

Define the labelled projections recursively by
$$q_\varnothing=\mathbb Eh,
\qquad
q_A=\mathbb E[h\mid\mathcal F_A]-\sum_{B\subsetneq A}q_B
\qquad(A\neq\varnothing).$$
Thus
$$\mathbb E[h\mid\mathcal F_A]=\sum_{B\subseteq A}q_B,
\qquad
h=\sum_{A\subseteq K_k}q_A.$$

For each isomorphism type $H\subseteq K_k$, write $r=v(H)$ and set
$$h_H
=
\sum_{\substack{A\subseteq K_k\\A\simeq H}}
\ \sum_{\psi:V(A)\hookrightarrow[r]}q_A(\psi).$$
The inner sum runs over all bijections from $V(A)$ to $[r]$, and $q_A(\psi)$ denotes $q_A$ with its vertex and pair arguments relabelled by $\psi$. Summing over these bijections makes $h_H$ symmetric. The recursive construction bounds each component kernel by a constant depending only on $k$ times $\lVert h\rVert_\infty$. Each injection of $V(A)$ into $[n]$
has $(n-r)_{k-r}$ extensions to an injection of $[k]$ into $[n]$. Summing the
labelled decomposition over these injections and grouping by
isomorphism type and image vertex set gives the asserted formula
for $S_n(h)$.

To prove degeneracy, independence of the underlying variables gives
\[
\mathbb E[q_A\mid\mathcal F_B]
=
\mathbb E[q_A\mid\mathcal F_{A\cap B}].
\]
We show inductively that this is zero whenever $A\not\subseteq B$.
For $J\subsetneq A$, the recursion and the induction hypothesis
yield
\[
\begin{aligned}
\mathbb E[q_A\mid\mathcal F_J]
&=
\mathbb E[h\mid\mathcal F_J]
-
\sum_{B\subsetneq A}\mathbb E[q_B\mid\mathcal F_J]\\
&=
\mathbb E[h\mid\mathcal F_J]
-
\sum_{B\subseteq J}q_B
=0.
\end{aligned}
\]
Indeed, terms with $B\not\subseteq J$ vanish after conditioning on
$\mathcal F_{B\cap J}$. This proves degeneracy and centering for all
nonempty labelled projections, and hence the stated conditional
degeneracy for $h_H(S)$.

Two embedded labelled projections with distinct image graphs are
uncorrelated: one image is not contained in the other, and
conditioning on the latter makes the former term vanish. Distinct
pairs $(H,S)$ contain no common image graph, so their component
terms are also uncorrelated. Expanding the variance therefore
proves the identity for $\mathcal A$.
Finally, applying the recursion to graphs on one or two vertices
gives the explicit component formulas.
\end{proof}

For fixed $k$, the variance identity shows that the contribution indexed by a nonempty graph $H$ to $S_n(h)$ has variance $O(n^{2k-v(H)})$, with constants depending only on $k$ and the bound on $h$. In particular, every induced three-vertex count has variance $O(n^5)$, as used in \cref{sec:first}. Components indexed by edgeless graphs depend only on the vertex variables, so changing only the pair variables leaves them unchanged. Among components on at most two vertices, only the edge component can change; its explicit formula is used in \cref{sec:robust-representative}.

\section{Choosing a robust representative}
\label{sec:robust-representative}

For finite graphs $H,G$, let $t(H,G)$ be the number of injective edge-preserving maps from $V(H)$ to $V(G)$, divided by $(v(G))_{v(H)}$. Thus $\mathbbm{E}[t(H,W_n)]=t(H,W)$, with the normalization of $U_n$ in \cref{def:generalized-u-statistics}.

We will modify a sample $W_n$ by changing edge variables while keeping the vertex variables fixed. We therefore need a representative that permits both edge additions and removals and supplies a nondegenerate response in every required direction. By \cref{sec:prelim}, it suffices to work with the edge, $2$-star and triangle densities. Write
$$d_W(x)=\int_0^1 W(x,z)\,dz,\qquad c_W(x,y)=\int_0^1 W(x,z)W(y,z)\,dz$$
for the degree and common-neighbour functions. The formula in \cref{lem:orthogonal-decomposition} shows that adding an edge $ij$ changes the edge-component kernels by twice the vector
\[
D_W(X_i,X_j)\coloneqq\bigl(1,d_W(X_i)+d_W(X_j),3c_W(X_i,X_j)\bigr).
\]
The corresponding contribution to the normalized density vector changes by $\binom n2^{-1}D_W(X_i,X_j)$; removing the edge gives the opposite change.

We seek a representative $W$ with two properties:
\begin{itemize}
\item $\eta_\delta<W(x,y)<1-\eta_\delta$ for all $x,y\in[0,1]$, with $\eta_\delta>0$ depending only on $\delta$;
\item $D_W$ is quantitatively nondegenerate in every direction.
\end{itemize}
The first gives each edge state conditional probability at least $\eta_\delta$; the second supplies directional increments for the correction in \cref{sec:first}. We first impose both properties by a transformation $F_\eta$, then choose its input so that the output has the prescribed densities.

\begin{lemma}[The representative transformation]
\label{lem:s3-transformation}
Let $0<\eta\leq1/12$. For a graphon $U$, define
\[
I_\eta U=(1-6\eta)U+3\eta.
\]
For a graphon $V$ with $3\eta\leq V\leq1-3\eta$, define
\[
S_\eta V(x,y)=
\begin{cases}
V(2x,2y)+2\eta,&x,y<1/2,\\
V(2x,2y-1),&x<1/2\leq y,\\
V(2x-1,2y),&y<1/2\leq x,\\
V(2x-1,2y-1)-2\eta,&x,y\geq1/2.
\end{cases}
\]
Set $F_\eta=S_\eta\circ I_\eta$. Then $F_\eta$ is $L^1$-continuous, and $W=F_\eta U$ satisfies $\eta\leq W\leq1-\eta$. The edge, $2$-star and triangle densities of $W$ differ from those of $U$ by $O(\eta)$, uniformly in $U$. Moreover,
$$\inf_{\substack{u\in\mathbb R^3\\\lVert u\rVert=1}}\int_{[0,1]^2}|u\cdot D_W(x,y)|\,dx\,dy=\Omega(\eta^2).$$
\end{lemma}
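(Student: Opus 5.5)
The plan is to verify the four claims separately. Everything rests on one structural fact: apart from the $\pm2\eta$ shifts, $S_\eta V$ is the pullback of $V$ under the measure-preserving map $\varphi(x)=2x \bmod 1$.

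\textbf{Range and continuity.} If $0\le U\le1$ then $3\eta\le I_\eta U\le 1-3\eta$, so $S_\eta$ is defined on it. The shifts $\pm2\eta$ then keep $W$ inside $[\eta,1-\eta]$. The map $I_\eta$ is affine with Lipschitz constant $1-6\eta$ in $L^1$. On each quadrant, $S_\eta$ is a change of variables by a map with Jacobian $4$ onto a region of measure $1/4$, so $\lVert S_\eta V-S_\eta V'\rVert_1=\lVert V-V'\rVert_1$. Hence $F_\eta$ is $L^1$-continuous.

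\textbf{Densities.} First, $\lVert I_\eta U-U\rVert_\infty\le 6\eta$. Second, $S_\eta V$ differs by at most $2\eta$ in sup norm from $V^\varphi(x,y)=V(\varphi(x),\varphi(y))$. Since $\varphi$ is measure preserving, $t(H,V^\varphi)=t(H,V)$ for every $H$. For $H$ with at most three edges, each product of edge weights is $3$-Lipschitz in sup norm. So the edge, $2$-star and triangle densities move by $O(\eta)$, uniformly in $U$.

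\textbf{Nondegeneracy: explicit computation.} Put $V=I_\eta U$. For $s,t\in[0,1)$, consider the four points $x\in\{s/2,(s+1)/2\}$, $y\in\{t/2,(t+1)/2\}$, and label them by $(i,j)\in\{1,2\}^2$ according to the halves. Set $\alpha=d_V(s)$, $\beta=d_V(t)$ and $\gamma=c_V(s,t)$. A direct computation splits each integral over $z$ into its two halves. It gives $d_W=\alpha+\eta$ on the first half and $d_W=\alpha-\eta$ on the second half. For the common-neighbour function it gives
\[
c_W^{11}=\gamma+\eta(\alpha+\beta)+2\eta^2,\quad c_W^{12}=\gamma+\eta(\beta-\alpha),\quad c_W^{21}=\gamma+\eta(\alpha-\beta),\quad c_W^{22}=\gamma-\eta(\alpha+\beta)+2\eta^2 .
\]
Let $D^{ij}=D_W$ at the corresponding point. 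Two combinations are then available:
\[
D^{11}-D^{12}-D^{21}+D^{22}=(0,0,12\eta^2),\qquad D^{11}-D^{22}=\bigl(0,4\eta,6\eta(\alpha+\beta)\bigr).
\]
In addition, $D^{11}=(1,\alpha+\beta+2\eta,\,3c^{11}_W)$ has entries bounded by $3$.

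\textbf{Nondegeneracy: case analysis.} Fix a unit vector $u=(a,b,c)$ and write $\Sigma=\sum_{i,j}|u\cdot D^{ij}|$. The triangle inequality gives three lower bounds:
\[
\Sigma\ge12\eta^2|c|,\qquad \Sigma\ge4\eta|b|-12\eta|c|,\qquad \Sigma\ge|a|-3|b|-3|c|.
\]
Choose a small absolute constant $\kappa$ and split into cases.
\begin{itemize}
\item If $|c|\ge\kappa$, the first bound gives $\Sigma\ge 12\kappa\eta^2$.
\item If $|c|<\kappa$ and $|b|\ge 6\kappa$, the second bound gives $\Sigma\ge 12\kappa\eta$.
\item Otherwise $|b|,|c|=O(\kappa)$, so $|a|\ge1-O(\kappa)\ge 1/2$, and the third bound gives $\Sigma\ge1/4$.
\end{itemize}
In all cases $\Sigma=\Omega(\eta^2)$, using $\eta\le1/12$. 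Finally, the change of variables from $(x,y)$ to $(s,t)$ in the four quadrants gives
\[
\int_{[0,1]^2}|u\cdot D_W|=\tfrac14\int_{[0,1]^2}\Sigma(s,t)\,ds\,dt=\Omega(\eta^2),
\]
uniformly in $u$ and $U$.

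The main obstacle is the common-neighbour computation. One must confirm that the second difference isolates the third coordinate with a nonvanishing $\Theta(\eta^2)$ coefficient, independent of $V$. The opposite-sign shifts on the diagonal blocks are what make this happen; the case analysis after that is routine.
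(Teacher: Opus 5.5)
Your proposal is correct and follows essentially the paper's proof: your four sibling points are the paper's split into the rescaled position $\tau$ and the half-sign $\sigma$, and your formulas for $d_W$ and $c_W$ match the paper's. Your combinations $D^{11}-D^{12}-D^{21}+D^{22}$ and $D^{11}-D^{22}$ correspond to averaging against $\sigma(x)\sigma(y)$ and against the single signs; the only difference is that you finish with a pointwise case split on $\Sigma(s,t)$, whereas the paper integrates first and bounds $|a|,|b|$ by $O(r/\eta^2)$, and both give $\Omega(\eta^2)$ uniformly in $u$ and $U$.
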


\begin{proof}
The first transformation leaves a margin of $3\eta$; the offsets of size at most $2\eta$ in the second leave a margin of $\eta$. Copying into the four half-size squares preserves all densities, since a uniform point in either half rescales to a uniform point in $[0,1]$. Each perturbation has pointwise size $O(\eta)$, so the defining integrals give the stated $O(\eta)$ density changes. Also,
\[
\|F_\eta U-F_\eta U'\|_1=(1-6\eta)\|U-U'\|_1,
\]
which proves continuity.

To prove nondegeneracy, separate the rescaled position from the half interval containing it. Define
\[
\tau(x)=
\begin{cases}2x,&x<1/2,\\2x-1,&x\geq1/2,\end{cases}
\qquad
\sigma(x)=
\begin{cases}1,&x<1/2,\\-1,&x\geq1/2.\end{cases}
\]
For uniform $x\in[0,1]$, $\tau(x)$ is uniform and $\sigma(x)$ is an independent uniform sign. For $W=S_\eta V$, direct calculation gives
\[
\begin{aligned}
d_W(x)&=d_V(\tau(x))+\eta\sigma(x),\\
c_W(x,y)&=c_V(\tau(x),\tau(y))
 +\eta\sigma(x)d_V(\tau(y))+\eta\sigma(y)d_V(\tau(x))
 +\eta^2\bigl(1+\sigma(x)\sigma(y)\bigr).
\end{aligned}
\]
The linear and product terms in the signs separate the three response directions. Let $r$ be the $L^1$ norm of $a+b(d_W(x)+d_W(y))+c\,c_W(x,y)$. Averaging against $\sigma(x)\sigma(y)$ and against $\sigma(x)$, with $\tau(x),\tau(y)$ fixed, isolates $c\eta^2$ and $\eta(b+c\,d_V(\tau(y)))$, respectively. Taking absolute values and integrating gives
\[
|c|\eta^2\leq r,
\qquad
\eta\int_0^1|b+c\,d_V(y)|\,dy\leq r.
\]
Since $0\leq d_V,c_V\leq1$, these inequalities and the mean of the original expression give $|b|\leq2r/\eta^2$ and $|a|\leq6r/\eta^2$. Hence $r\geq\eta^2\|(a,b,c)\|_2/9$. Taking $(a,b,c)=(u_1,u_2,3u_3)$ yields $\int_{[0,1]^2}|u\cdot D_W(x,y)|\,dx\,dy\geq\eta^2/9$ for every unit vector $u$.
\end{proof}

The transformation gives every input the required properties, but perturbs its densities. We now use the $\delta$-neighbourhood of the target to choose an input whose image has exactly the prescribed densities.

\begin{lemma}[Interior representatives]
\label{lem:s3-representatives}
Let $\mathcal H$ be a nonempty selection of at most three graphs from
$\{\cedge[0.15]{black},\ctriangle[0.2]{white}{white}{white},\ctriangle[0.2]{black}{white}{white},\ctriangle[0.2]{white}{black}{black},\ctriangle[0.2]{black}{black}{black}\}$, and let $v$ be $\delta,\mathcal H$-interior. There exist $\eta=\Theta(\delta)$ and a graphon $U$ such that $W=F_\eta U$ satisfies
\[
\frac{v(H)!}{|\operatorname{Aut}(H)|}\,t_{\mathrm{ind}}(H,W)=v_H
\qquad(H\in\mathcal H).
\]
\end{lemma}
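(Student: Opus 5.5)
The plan is a topological fixed-point argument. I would show that the map $U\mapsto$ (densities of $F_\eta U$) covers the target $v$, using the fact that $F_\eta$ moves densities by only $O(\eta)$ and that a whole $\delta$-ball around $v$ is asymptotically feasible.

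First, I reduce to the projected coordinates. Write $\Phi(W)=\bigl(\tfrac{v(H)!}{|\operatorname{Aut}(H)|}t_{\mathrm{ind}}(H,W)\bigr)_{H\in\mathcal H}\in\mathbb R^{|\mathcal H|}$. By the inclusion--exclusion identities of \cref{sec:prelim}, $\Phi(W)$ is an affine function of the edge, $2$-star and triangle densities with fixed coefficients. Hence \cref{lem:s3-transformation} gives
\[
\bigl|\Phi(F_\eta U)-\Phi(U)\bigr|\le C\eta
\]
uniformly in $U$, for an absolute constant $C$. Choose $\eta=\min(1/12,\delta/(2C))=\Theta(\delta)$, so that this error is at most $\delta/2$.

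Second, I build a continuous family of graphons covering the ball. Since $v$ is $\delta,\mathcal H$-interior, for each $u\in B(v,\delta)$ there is a graph sequence whose densities tend to $u$. By compactness of graphon space (the limit theory of \cite{lovasz2006limits}), each such $u$ is $\Phi(U_u)$ for some graphon $U_u$. The map $u\mapsto U_u$ need not be continuous, so I only pick graphons at the $m=|\mathcal H|+1$ vertices of a simplex $\Delta\subset B(v,\delta)$ that contains $\bar B(v,\delta/2)$. Such a simplex exists, for example a regular simplex centered at $v$ with inradius $\delta/2$, scaled so that its circumradius $m\delta/2\cdot\ldots$ fits inside the ball. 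If the standard regular simplex does not fit, I shrink both radii by a dimensional constant, which changes $\eta$ only by a constant factor. Call the vertex graphons $U_1,\dots,U_m$. For $\lambda$ in the standard simplex I form the block (disjoint-type) construction $U_\lambda$: partition $[0,1]$ into intervals of lengths $\lambda_i$, and on the block of interval $i$ put a rescaled copy of $U_i$.

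The main obstacle is that densities of $U_\lambda$ are \emph{not} affine in $\lambda$, since cross-block terms appear. So $\Phi(U_\lambda)$ does not trace out $\Delta$. To fix this I would instead use the convex combination of points $\sum\lambda_i\Phi(U_i)$ only as a target, and realize it by a connected path in graphon space. Concretely, I would use the facts that
\begin{itemize}
\item the set of graphon density vectors of $\mathcal H$ restricted to $B(v,\delta)$ is all of it, and
\item graphon space (modulo measure-preserving maps) is path-connected under $L^1$-continuous interpolations $U\mapsto (1-s)U+sU'$, along which $\Phi$ is a polynomial in $s$.
\end{itemize}
The cleanest route I would try first is a degree argument. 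Define a continuous map $g:\Delta\to$ graphons by iterated linear interpolation on faces (linear on edges, then coned inward by convex combination with a fixed barycentric graphon). Then $\Phi\circ g$ is continuous, and because $\Phi$ is $\delta$-close to... — here I would instead fall back on the following more robust scheme. I take a fine triangulation of $\Delta$ and choose $U_x$ with $\Phi(U_x)=x$ at every mesh vertex $x$. On each cell I interpolate pointwise convexly, $U=\sum\mu_jU_{x_j}$. I then need $\Phi$ along these interpolations to stay within $\delta/4$ of the affine interpolation; since $\Phi$ of a convex combination is not controlled by the mesh, this requires the vertex graphons to be chosen coherently. I would try to get this coherence by selecting the $U_x$ for nearby $x$ to be $L^1$-close, using \emph{local} continuous selections given by small perturbations of a single graphon. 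That single graphon would have nondegenerate density derivatives in the style of the $D_W$ argument of \cref{lem:s3-transformation}.

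Granting a continuous $g:\Delta\to$ graphons with $|\Phi(g(x))-x|\le\delta/4$, set $\psi(x)=\Phi(F_\eta g(x))$. This map is continuous by the $L^1$-continuity of $F_\eta$ and $\Phi$, and it satisfies $|\psi(x)-x|\le 3\delta/4<\,$ the inradius margin. By the standard Brouwer corollary, a continuous map of a simplex moving each point less than the distance from $v$ to $\partial\Delta$ hits $v$. This gives $U=g(x^*)$ with $\Phi(F_\eta U)=v$, which proves the lemma.
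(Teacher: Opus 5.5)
There is a genuine gap, and it sits in the step you explicitly grant: a continuous $g$ from $\Delta$ into graphon space with $|\Phi(g(x))-x|\le\delta/4$ on all of $\Delta$. That step carries the whole content of the lemma, and neither route you sketch produces it.

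Convex interpolation $\sum_j\mu_jU_{x_j}$ of graphons chosen separately at mesh vertices gives no control on $\Phi$, which is cubic in $U$. Graphons with nearly equal density vectors can be far apart in cut distance, and then $\Phi$ of their average need not be near the average of their $\Phi$-values, however fine the mesh. (The feasible regions are not even convex; the scalloped edge--triangle lower boundary is an example.)

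The implicit-function idea based on the nondegeneracy of $D_W$ gives a continuous section only over a neighbourhood of radius polynomially small in $\eta$. The nondegeneracy constant is $\Omega(\eta^2)$ and admissible perturbations have size at most the margin $\eta$. That neighbourhood is far smaller than the $\Theta(\delta)$-ball you must cover. Gluing such local patches requires joining structurally different graphons by paths with controlled densities, which is the original problem again.

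Separately, the closing numerics fail for $|\mathcal H|\ge2$. An $|\mathcal H|$-dimensional simplex inside $B(v,\delta)$ has inradius at most $\delta/|\mathcal H|$, so a displacement bound of $3\delta/4$ does not beat it. This part is fixable by shrinking the error budgets.

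The idea you are missing is that interior control is unnecessary. Graphon space is convex, so any $L^1$-continuous loop $U_\varphi$ extends to a disk by coning, $(1-r)/2+rU_\varphi$. If the density curve of $F_\eta U_\varphi$ stays away from $v$ and winds around it, the density image of the disk must contain $v$. So only a boundary loop with controlled densities is needed.

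The paper obtains such loops for four induced pairs from the explicit extremal families of \cite{huang20143,glebov2016densities}. These trace the boundary of the feasible region and so stay at distance $\Omega(\delta)$ from $v$.

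The remaining cases use structure you did not exploit:
\begin{itemize}
\item $F_\eta$ acts on the (edge, $2$-star, triangle) profile by the explicit invertible triangular affine map \cref{eq:s3-density-map}. With three prescribed coordinates one simply realizes the preimage profile, which is $O(\eta)$-close to the target and hence feasible; no continuity is needed at all.
\item With one prescribed density, or the edge density plus one more, linearity of the edge density in $U$ and the intermediate value theorem along a convex combination suffice.
\item For the one-edge/triangle pair and its complement, the paper holds the concave quantity $q$ of \cref{eq:s3-q} fixed along a path built by adjoining universal vertices. It uses that a rescaled $F_\eta$ maps $q$-levels to $q$-levels.
\end{itemize}

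To complete your argument, replace ``control on $\Delta$'' by ``winding on $\partial\Delta$ plus coning''. You would then still have to construct the boundary loops (or spheres) of graphons, which is the substantive, case-by-case part of the proof.
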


\begin{proof}

For each choice of $\mathcal{H}$, choose $\eta$ to be a sufficiently small positive multiple of $\delta$. One prescribed density is handled by interpolation, and three by inverting the induced map on densities. For two densities, we first treat the edge coordinate directly, then use boundary loops for four induced-density pairs and a level-set path for the remaining two complementary cases.

\smallskip\noindent\emph{One prescribed density.}
Choose graphons $U_-,U_+$ representing $v-\delta/2$ and $v+\delta/2$, and set $U_s=(1-s)U_-+sU_+$ for $s\in[0,1]$. The prescribed density of $F_\eta U_s$ varies continuously. The $O(\eta)$ perturbation in \cref{lem:s3-transformation} leaves its endpoint values on opposite sides of $v$, so the intermediate value theorem gives the required graphon.

\smallskip\noindent\emph{Three prescribed densities.}
Here we invert the affine map that $F_\eta$ induces on the full density profile. Set $a=1-6\eta$ and $b=3\eta$. Direct calculation gives
\begin{equation}
\begin{aligned}
t(\cedge[0.2]{black},F_\eta U)&=a\,t(\cedge[0.2]{black},U)+b,\\
t(\ctriangle[0.3]{white}{black}{black},F_\eta U)&=a^2t(\ctriangle[0.3]{white}{black}{black},U)+2ab\,t(\cedge[0.2]{black},U)+b^2+\eta^2,\\
t(\ctriangle[0.3]{black}{black}{black},F_\eta U)&=a^3t(\ctriangle[0.3]{black}{black}{black},U)+3a^2b\,t(\ctriangle[0.3]{white}{black}{black},U)
 +3a(b^2+\eta^2)t(\cedge[0.2]{black},U)+b^3+3b\eta^2.
\end{aligned}
\label{eq:s3-density-map}
\end{equation}
This map is triangular with nonzero diagonal coefficients $a,a^2,a^3$. The linear parts of this map and its inverse differ from the identity by $O(\eta)$, and their constant terms are $O(\eta)$. Any three prescribed coordinates determine the edge, $2$-star and triangle densities by a fixed invertible affine map. Thus $\delta,\mathcal H$-interiority gives a feasible neighbourhood of radius $\Theta(\delta)$ around the corresponding density vector. For sufficiently small $\eta=\Theta(\delta)$, its inverse image remains feasible. Choose a graphon $U$ realizing that inverse image; then $F_\eta U$ has the prescribed densities.

\smallskip\noindent\emph{Two prescribed densities, one of them the edge density.}
Let $H$ be the other graph. We compensate for the edge-density shift first, then interpolate in the $H$-density. Choose graphons $U_-,U_+$ with edge density $\frac{v_{\cedge[0.1]{black}}-3\eta}{1-6\eta}$ and prescribed $H$-densities $v_H-\delta/2$ and $v_H+\delta/2$. These pairs are feasible because the edge-coordinate shift is $O(\eta)$. Every convex combination of $U_-,U_+$ has the same edge density, so its image under $F_\eta$ has edge density $v_{\cedge[0.1]{black}}$. The $O(\eta)$ perturbation leaves the endpoint $H$-densities on opposite sides of $v_H$, and the intermediate value theorem gives both targets.

\smallskip\noindent\emph{Two induced densities: boundary paths.}

For these cases, it suffices to find an $L^1$-continuous graphon loop $U_\varphi$ whose density curve winds nontrivially around $v$ and stays at distance $\Omega(\delta)$ from it. Applying $F_\eta$ moves the curve by $O(\eta)$ and therefore preserves its winding number. Fill the graphon loop by
\[
(1-r)/2+rU_\varphi,\qquad 0\leq r\leq1,
\]
and apply $F_\eta$ throughout. At $r=0$ the graphon is independent of $\varphi$, so the transformed family is continuously parameterized by a disk. Its density image must contain $v$; otherwise radial contraction would contract a loop of nonzero winding number in $\mathbb R^2\setminus\{v\}$.

We obtain the loops from~\cite[Theorem~1.2]{huang20143} and~\cite[Theorems~13--14]{glebov2016densities}. All pairs below use the normalization in the lemma. The pair $(\ctriangle[0.2]{black}{white}{white},\ctriangle[0.2]{black}{black}{black})$ and its complement are treated separately afterward.

For $(\ctriangle[0.2]{black}{white}{white},\ctriangle[0.2]{white}{black}{black})$, the feasible region is the triangle with vertices $(0,0),(3/4,0),(0,3/4)$. Concatenate three families: two disjoint cliques of measure $r$ each, with an isolated remainder, as $r$ increases from $0$ to $1/2$; two equal blocks with within-block probability $r$ and cross probability $1-r$, as $r$ decreases from $1$ to $0$; and complete bipartite graphons with part measures $r,1-r$, as $r$ decreases from $1/2$ to $0$. Their density pairs are
\[
\bigl(6r^2(1-r),0\bigr),\qquad
\left(\frac38-\frac38(1-2r)^3,\frac38+\frac38(1-2r)^3\right),\qquad
\bigl(0,3r(1-r)\bigr).
\]
Using consecutive intervals for the blocks makes the graphon endpoints agree, with the empty graphon at both ends. This $L^1$-continuous loop traverses the boundary once.

For $(\ctriangle[0.2]{white}{black}{black},\ctriangle[0.2]{black}{black}{black})$, the complete multipartite boundary family in~\cite[Theorem~14 and its proof]{glebov2016densities} joins the balanced complete bipartite graphon to the complete graphon. Close it by decreasing a clique with isolates from measure $1$ to $0$, then increasing the cross probability of a balanced bipartite graphon from $0$ to $1$. These $L^1$-continuous paths have matching graphon endpoints and trace the boundary once. Complementation gives the loop for $(\ctriangle[0.2]{white}{white}{white},\ctriangle[0.2]{black}{white}{white})$. For these pairs and the preceding pair, interiority gives distance at least $\delta$ from the density loop.

For $(\ctriangle[0.2]{white}{white}{white},\ctriangle[0.2]{black}{black}{black})$, write $v=(x,y)$. The two upper arcs in~\cite[Theorem~1.2 and its proof]{huang20143} are the clique-with-isolates curve $((1-r)^2(1+2r),r^3)$, $0\leq r\leq1$, and its reflection across the diagonal. Closing each against the common lower boundary path gives two $L^1$-continuous graphon loops. Let $z_*=(a,a)$ be the non-endpoint intersection of the upper arcs, which lies on the outer boundary. If $x\geq y$, choose the clique loop: the feasible-region description places $v$ strictly inside this loop, and its only points off the outer boundary have the form $z=(s,t)$ with $s\leq a\leq t$. Hence
\[
\|z-z_*\|\leq t-s\leq\sqrt2\,\|z-v\|,
\qquad
\delta\leq\|v-z_*\|\leq(1+\sqrt2)\|z-v\|.
\]
All other points of the loop lie on the outer boundary and are at distance at least $\delta$ from $v$. If $x<y$, use the reflected loop and interchange the coordinates in this argument. Thus in each case the chosen density curve winds around $v$ and stays at distance at least $\delta/(1+\sqrt2)$ from it.

\smallskip\noindent\emph{Two induced densities: $(\ctriangle[0.2]{black}{white}{white},\ctriangle[0.2]{black}{black}{black}),(\ctriangle[0.2]{white}{white}{white},\ctriangle[0.2]{white}{black}{black})$.}
We treat the first pair and obtain the second by complementation. We seek a path of transformed graphons on which one linear combination of the densities is fixed while the triangle density crosses its target. Define
\begin{equation}
q(U)=t(\cedge[0.2]{black},U)-2t(\ctriangle[0.3]{white}{black}{black},U)
=t_{\mathrm{ind}}(\ctriangle[0.3]{black}{white}{white},U)-t_{\mathrm{ind}}(\ctriangle[0.3]{black}{black}{black},U)
=\frac18-2\int_0^1\bigl(d_U(x)-\tfrac14\bigr)^2\,dx.
\label{eq:s3-q}
\end{equation}
This function is concave on pointwise convex segments, and $-1\leq q(U)\leq1/8$. Since $q(U)\geq-t(\cedge[0.15]{black},U)\geq-1$, equality $q(U)=-1$ forces $U=1$ almost everywhere.

First, any two graphons $U_0,U_1$ with $q(U_0)=q(U_1)=c>-1$ can be joined by an $L^1$-continuous path on that level. Their convex combinations $V_s=(1-s)U_0+sU_1$, $s\in[0,1]$, satisfy $q(V_s)\geq c$. To restore equality, adjoin universal vertices of a suitable measure. For $0\leq t<1$, set
\[
V_{s,t}(x,y)=
\begin{cases}
V_s\bigl(x/(1-t),y/(1-t)\bigr),&x,y<1-t,\\
1,&\text{otherwise},
\end{cases}
\qquad V_{s,1}=1.
\]
This family is jointly $L^1$-continuous: this holds for step graphons, and compression multiplies $L^1$ approximation errors by $(1-t)^2$. Differentiating the degree expression for $q$ gives
\[
\frac{\partial}{\partial t}q(V_{s,t})
=-6(1-t)\int_0^1
 \bigl[t+(1-t)d_{V_s}(x)\bigr]\bigl(1-d_{V_s}(x)\bigr)\,dx<0
\qquad(0<t<1),
\]
since $V_s$ is not complete. As $q(V_{s,0})\geq c>-1=q(V_{s,1})$, there is a unique $t_s\in[0,1)$ with $q(V_{s,t_s})=c$. This root varies continuously with $s$: every limit of roots is a root at the limiting $s$ and cannot equal $1$. Thus $V_{s,t_s}$ is the required path, with endpoints $U_0,U_1$ because $t_0=t_1=0$.

To use this path after applying $F_\eta$, first rescale the input so that the transformed $q$-value depends only on its original value. For $\eta<1/12$, $\frac{1-12\eta}{1-6\eta}U$ is still a graphon. Set $a=1-12\eta$ in this case. The degree formula in the proof of \cref{lem:s3-transformation} gives
\[
q\left(F_\eta\left(\frac{1-12\eta}{1-6\eta}U\right)\right)
=a^2q(U)+\frac{1-a^2}{8}-2\eta^2.
\]
To make this equal the prescribed linear combination, choose the input level
\[
c=
\frac{
 \frac13v_{\ctriangle[0.15]{black}{white}{white}}
 -v_{\ctriangle[0.15]{black}{black}{black}}
 -(1-a^2)/8+2\eta^2
}{a^2}.
\]
It differs by $O(\eta)$ from $\frac13v_{\ctriangle[0.15]{black}{white}{white}}-v_{\ctriangle[0.15]{black}{black}{black}}$. The two density vectors
\[
\left(
3c+3v_{\ctriangle[0.15]{black}{black}{black}}\pm\frac{3\delta}{10},
\quad
v_{\ctriangle[0.15]{black}{black}{black}}\pm\frac{\delta}{10}
\right)
\]
bracket the target triangle density while keeping $q=c$. For sufficiently small $\eta=\Theta(\delta)$, both lie in $B(v,\delta)$. Choose graphons $U_0,U_1$ realizing the minus and plus vectors. By \cref{eq:s3-q}, both have $q=c$; their different triangle densities imply $c>-1$. The preceding construction therefore joins them by a path $V_{s,t_s}$ on that level.

Apply $F_\eta\bigl(\frac{1-12\eta}{1-6\eta}V_{s,t_s}\bigr)$ along this path. Its $q$-value is constantly $\frac13v_{\ctriangle[0.15]{black}{white}{white}}-v_{\ctriangle[0.15]{black}{black}{black}}$. For any graphon $U$, this rescaled transformation differs pointwise from $U(\tau(x),\tau(y))$ by at most $11\eta$. Copying preserves densities, so the triangle density changes by at most $33\eta$. Choose $33\eta<\delta/10$; the transformed endpoints still bracket the target triangle density. The intermediate value theorem attains that density, and the fixed $q$-value gives the prescribed one-edge induced density. Complementation is compatible with $F_\eta$ after exchanging its two half intervals, which gives the complementary case.
\end{proof}

The modification arguments require integer targets for all five counts, close to their expectations in the chosen representative. The arithmetic hypothesis of \cref{thm:main} gives a completion, but not necessarily a nearby one. The next lemma supplies such a completion while preserving every prescribed count.

\begin{lemma}[Arithmetic completion]
\label{lem:s3-arithmetic}
Let $\mathcal H\subseteq\{\cedge[0.15]{black},\ctriangle[0.2]{white}{white}{white},\ctriangle[0.2]{black}{white}{white},\ctriangle[0.2]{white}{black}{black},\ctriangle[0.2]{black}{black}{black}\}$ with $1\leq|\mathcal H|\leq3$, let $n\in\mathbb N$ and $\eta\in(0,\frac14)$, and let $W$ be a graphon with $\eta<W<1-\eta$. Suppose that $(\mathbbm{E}[\#H(W_n)])_{H\in\mathcal H}$ admits an arithmetic completion as in \cref{thm:main}. If $n$ is sufficiently large, depending only on $\eta$, there are nonnegative integers $N_H'$, indexed by the same five graphs and satisfying the two count identities in \cref{thm:main}, such that

$$N_H'=\mathbbm{E}[\#H(W_n)],\qquad (H\in\mathcal H),$$
$$\forall H\in\{\ctriangle[0.2]{white}{white}{white},\ctriangle[0.2]{black}{white}{white},\ctriangle[0.2]{white}{black}{black},\ctriangle[0.2]{black}{black}{black}\},\hspace{2mm}
\bigl|N_{H}'-\mathbbm{E}[\#H(W_n)]\bigr|\leq2n,\qquad \bigl|N_{\cedge[0.15]{black}}'-\mathbbm{E}[\#\cedge[0.15]{black}(W_n)]\bigr|\leq24.$$

\end{lemma}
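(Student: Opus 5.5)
The plan is to perturb the given arithmetic completion along short integer vectors in the kernel of the two count identities. Write $E_H=\mathbb E[\#H(W_n)]$ and order the five coordinates as $(\cedge[0.15]{black},\ctriangle[0.2]{white}{white}{white},\ctriangle[0.2]{black}{white}{white},\ctriangle[0.2]{white}{black}{black},\ctriangle[0.2]{black}{black}{black})$. The two identities of \cref{thm:main} hold for every graph on $n$ vertices. Since they are linear, the real vector $E=(E_H)$ satisfies them exactly. Let $N$ be the assumed completion; it is an integer vector satisfying the identities with $N_H=E_H$ for $H\in\mathcal H$. Then $E-N$ lies in the real subspace
\[
K_{\mathcal H}=\Bigl\{x\in\mathbb R^5:\ \textstyle\sum_{H\neq\cedge[0.1]{black}}x_H=0,\ (n-2)x_{\cedge[0.1]{black}}=x_{\ctriangle[0.15]{black}{white}{white}}+2x_{\ctriangle[0.15]{white}{black}{black}}+3x_{\ctriangle[0.15]{black}{black}{black}},\ x_H=0\ (H\in\mathcal H)\Bigr\}.
\]
Because any three coordinates determine the other two, $\dim K_{\mathcal H}=3-|\mathcal H|$.

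The key step is to exhibit, for each $\mathcal H$, integer vectors $g_1,\dots,g_{3-|\mathcal H|}\in K_{\mathcal H}\cap\mathbb Z^5$ spanning $K_{\mathcal H}$ over $\mathbb R$. They must have edge entry of absolute value at most $3$ and triangle entries of absolute value at most $n$, with only one generator allowed to have large entries. Given such vectors, write $E-N=\sum_i\lambda_ig_i$ with real $\lambda_i$, and set $N'=N+\sum_i\lfloor\lambda_i\rceil g_i$, where $\lfloor\cdot\rceil$ is rounding to the nearest integer. Then $N'$ is an integer vector satisfying both identities, and it agrees with $E$ on $\mathcal H$. Its error is $E-N'=\sum_i(\lambda_i-\lfloor\lambda_i\rceil)g_i$, which is bounded coordinatewise by half the sum of the generators' entries. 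This should give the bounds $2n$ for the triangle counts and $24$ for the edge count.

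The generators come from a finite case check over the subsets. The basic small moves are $(0,1,-2,1,0)$, $(0,0,1,-2,1)$ and $(0,1,0,-3,2)$, which fix the edge count. Moves that change the edge count are of the form $(1,-(n-2),n-2,0,0)$, $(1,0,-(n-2),n-2,0)$, $(2,-(n-2),0,n-2,0)$, $(2,0,-(n-2),0,n-2)$ and $(3,-(n-2),0,0,n-2)$. The last is the one needed when $\ctriangle[0.2]{black}{white}{white}$ and $\ctriangle[0.2]{white}{black}{black}$ are both prescribed, and it is integral because $x_{\ctriangle[0.15]{black}{black}{black}}=(n-2)x_{\cedge[0.1]{black}}/3$ with $x_{\cedge[0.1]{black}}=3$.

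\begin{itemize}
\item When $|\mathcal H|=3$, the space $K_{\mathcal H}$ is trivial and $N=E$ already.
\item When $|\mathcal H|=2$, one vanishing pattern of the kernel is forced, and I check each of the ten pairs by hand for an admissible generator in the list above.
\item When $|\mathcal H|=1$, I pick two such vectors vanishing at the prescribed coordinate, at most one of which changes the edge count.
\end{itemize}

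Finally, nonnegativity follows from $\eta<W<1-\eta$. Each labelled induced three-vertex density is at least $\eta^3$, so every $E_H$ for a three-vertex $H$ is at least $\eta^3\binom n3$, and $E_{\cedge[0.1]{black}}\geq\eta\binom n2$. An error of $O(n)$ therefore keeps all entries positive once $n$ is large depending on $\eta$.

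The main obstacle is the case check in the second paragraph. For every pattern I must verify that the chosen vectors are integral and lie in $K_{\mathcal H}$. Parity and divisibility (the factors $2$ and $3$ multiplying $n-2$) dictate the edge entry. I must also confirm that the accumulated rounding error, at most $\tfrac12$ times the sum over generators, stays within $2n$ and $24$.
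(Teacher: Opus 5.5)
Your lattice-rounding argument is correct in principle, though it takes a different route from the paper. Since $E-N\in K_{\mathcal H}$, and the $g_i$ are integral, vanish on $\mathcal H$ and span $K_{\mathcal H}$ over $\mathbb R$, the vector $N+\sum_i\lfloor\lambda_i\rceil g_i$ inherits integrality, both identities and the prescribed values from $N$. Its error is at most $\frac12\sum_i|g_i|$ coordinatewise, and the $g_i$ need not form a lattice basis.

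\emph{However, your list of moves is incomplete, so the case check fails as written.} Use your coordinate order (edge, empty, one-edge, two-edge, triangle). For $\mathcal H=\{\text{edge},\text{two-edge graph}\}$ the kernel is spanned by $(0,2,-3,0,1)$. For $\mathcal H=\{\text{empty},\text{one-edge graph}\}$ it is spanned by $(1,0,0,-(n-2),n-2)$. No vector in your list lies in either kernel. Likewise, for $\mathcal H=\{\text{two-edge graph}\}$, none of your edge-preserving moves vanishes at the prescribed coordinate. Adding these two vectors completes the check. The errors are then at most $(n-2)/2+3/2\le 2n$ on the three-vertex counts and at most $3$ on the edge count. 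Your restriction to a single large generator is also unnecessary, since two edge-changing generators give error at most $n-2$.

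The paper does not perturb the given completion. It rebuilds the free counts from the expectations, by rounding and by choosing nearest solutions of congruences modulo $n-2$. It pairs free counts with consecutive edge numbers so that the relevant coefficient or determinant is $\pm1$. The assumed completion is used only to guarantee that these congruences are solvable and the forced counts are integers. The edge error is then bounded afterwards through the weighted identity, by $12n/(n-2)\le24$.

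Your route takes $N$ as a base point, so integrality and divisibility are inherited rather than re-established, and one rounding principle treats all cases uniformly. It also bounds the edge error directly by $3$. The amount of casework is comparable: yours lies in exhibiting generators, the paper's in setting up the congruences.
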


\begin{proof}
For every $3$-vertex graph $H$, interiority gives $\mathbbm{E}[\#H(W_n)]\geq\eta^3\binom n3$. The expectations satisfy both count identities in \cref{thm:main}. We choose nearby integers satisfying these identities while keeping the prescribed counts fixed.

Suppose first that the edge count is not prescribed. If three induced counts are prescribed, the fourth equals its expectation; integrality and divisibility follow from the assumed completion. With two prescribed counts, the two free counts have a fixed sum. Eliminating one gives a solvable congruence for the other modulo $n-2$, whose solutions have spacing at most $n-2$. Choose the solution nearest its expectation and recover the other count from the sum. Both errors are at most $(n-2)/2$.

With one prescribed induced count, choose two free counts whose graphs have consecutive numbers of edges; such a pair exists among any three of the four graphs. Round the remaining free count to the nearest integer. The resulting congruence has coefficient $1$ or $-1$, so it remains solvable. Choose the nearest solution for one of the two remaining counts and recover the other from their required sum. Their errors are at most $(n-2)/2$ and $(n-2)/2+1/2$.

Now suppose the edge count is prescribed. If two induced counts are also prescribed, the sum and weighted sum force the other two to equal their expectations; the assumed completion makes them integers. Otherwise, choose two free counts with consecutive edge numbers and round the expectations of the other free counts to the nearest integers. The two identities determine the chosen pair through a matrix of determinant $1$ or $-1$, so the resulting counts are integers. Since the edge numbers are in $\{0,1,2,3\}$, each of these two errors is at most $5/2$.

All four induced-count errors are at most $2n$, so $\eta^3\binom n3\geq4n$ ensures nonnegativity. The weighted count identity then gives a nonnegative integer edge target, and for $n\geq4$,
\[
\bigl|N_{\cedge[0.15]{black}}'-\mathbbm{E}[\#\cedge[0.15]{black}(W_n)]\bigr|
\leq\frac{(1+2+3)\,2n}{n-2}
=\frac{12n}{n-2}\leq24.
\]
This error is zero when the edge count is prescribed.
\end{proof}

Together, \cref{lem:s3-representatives,lem:s3-arithmetic} provide a sampling graphon and nearby integer targets. We now modify a sample in stages at decreasing scales. The first stage uses the edge response to reduce the errors in all induced $3$-vertex counts to $O_\delta(n)$.

\section{Correcting an ordinary sample}\label{sec:first}
For the rest of the proof, fix $\delta,n,\mathcal{H}$ as in \cref{thm:main}. Let $W$ and $(N_{H})_{H\in\{\cedge[0.1]{black},\ctriangle[0.2]{white}{white}{white},\ctriangle[0.2]{black}{white}{white},\ctriangle[0.2]{white}{black}{black},\ctriangle[0.2]{black}{black}{black}\}}$ be the graphon and target counts supplied by \cref{lem:s3-representatives,lem:s3-arithmetic}, respectively. We take $\eta_\delta=\eta/2=\Theta(\delta)$, with $\eta$ as in \cref{lem:s3-representatives}, as the interiority margin in \cref{lem:s3-arithmetic} and in the modifications below. Fix a partition
\[
 [n]=A\sqcup B\sqcup C\sqcup D
\]
into four sets with sizes differing by at most one, with $|A|=\lfloor n/4\rfloor$. Each correction stage uses edges with both endpoints in its assigned set. Since no $3$-vertex set contains two disjoint edges, modifications in one set leave the increments of modifications in the other sets unchanged.

We track all induced $3$-vertex counts simultaneously. By \cref{sec:prelim}, this is equivalent to tracking the edge, $2$-star and triangle densities, which we use for the first-order correction.

\begin{lemma}[First-order correction]\label{lem:first}
For sufficiently large $n$, depending only on $\delta$, with probability at least $9/10$ the sample $W_n$ admits additions and removals of edges inside $A$ after which every induced $3$-vertex count differs from its expectation in $W_n$ by $O_\delta(n)$.
\end{lemma}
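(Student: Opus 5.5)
The plan is to correct the sample by a \emph{sequential greedy} procedure that always uses the exact current increments. This avoids having to control second-order interactions between flips. First I bound the initial error. By \cref{lem:orthogonal-decomposition}, every induced $3$-vertex count of $W_n$ has variance $O(n^5)$, so by Chebyshev all of them lie within $Kn^{5/2}$ of their expectations with probability at least $97/100$, for a constant $K$. By \cref{sec:prelim} it is equivalent to work with the error vector $e\in\mathbb R^3$ of (edge, $2$-star, triangle) counts. Flipping a pair $ij\subseteq A$ changes these counts exactly by $\pm\bigl(1,\deg(i)+\deg(j),\operatorname{codeg}(i,j)\bigr)$ in the current graph, up to the fixed normalizing constants. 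Up to those constants this is $\pm\bigl(1,\deg(i)+\deg(j),3\operatorname{codeg}(i,j)\bigr)$ in the injective scale, which is $\approx\pm n\,D_W(X_i,X_j)$ in the $2$-star and triangle coordinates. Since every increment has size $O(n)$, it suffices to show that the error can be driven into a ball of radius $O_\delta(n)$.

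Next I set up a \emph{good event} for the sample, which holds with probability at least $97/100$ for large $n$. By Hoeffding and a union bound over vertices and pairs, every degree satisfies $\deg(i)=(n-1)d_W(X_i)+O(\sqrt{n\log n})$ and every codegree satisfies $\operatorname{codeg}(i,j)=(n-2)c_W(X_i,X_j)+O(\sqrt{n\log n})$. I would also require the following. Partition $[0,1]^2$ into a bounded number of cells (depending on $\delta$) on which $D_W$ varies by at most $\epsilon\eta^2$. Then each cell of positive measure should contain $\Theta(n^2)$ pairs of $A$, of which at least an $\eta/2$ fraction are edges and at least an $\eta/2$ fraction are non-edges. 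This uses $\eta<W<1-\eta$ together with a law of large numbers for the $X_i$ and a Chernoff bound for the $Y_{ij}$. The nondegeneracy bound $\inf_u\int|u\cdot D_W|=\Omega(\eta^2)$ from \cref{lem:s3-transformation} then gives the following for every unit vector $u$. There is a cell of measure $\Omega_\delta(1)$ on which $|u\cdot D_W|\ge c\eta^2$ with a constant sign. Adding edges there (if the sign is positive) or removing edges there (if it is negative) produces a move with increment $g$ satisfying $u\cdot g\ge c\eta^2 n$ and $\norm{g}=O(n)$.

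The greedy step is then as follows. While $\norm{e}\ge Cn$, take $u=-e/\norm{e}$ and perform one such move, using a pair that has not been flipped before and whose endpoints have each been flipped fewer than $M\sqrt n$ times so far. The new error satisfies $\norm{e+g}^2\le\norm{e}^2-2c\eta^2n\norm{e}+O(n^2)$. This is a decrease by $\Omega_\delta(n\norm{e})$ once $\norm{e}\ge Cn$ with $C=C(\delta)$ large. Summing the decreases $\norm{e}-\norm{e+g}=\Omega_\delta(n)$ shows that at most $O_\delta(n^{3/2})$ moves are needed.

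The main obstacle is to keep the good event valid throughout the procedure. The actual increments depend on the current degrees and codegrees, which drift as edges in $A$ are flipped. The cap of $M\sqrt n$ flips per vertex bounds the drift in each degree and codegree by $O(\sqrt n)$. The $O(n^{3/2})$ used pairs are a negligible fraction of the $\Theta(n^2)$ available pairs in each cell. Enforcing the cap via a balanced schedule, for instance choosing pairs from a rotating family of near-perfect matchings inside each cell, leaves $\Theta(n^2)$ eligible fresh pairs in every cell at all times. With these two controls, the perturbation of each increment is $O(\sqrt{n\log n})=o(\eta^2n)$, so the directional bound survives with $c$ halved. Checking that the cap and the cell supply are compatible for all $O(n^{3/2})$ steps is the step I would verify most carefully. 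Finally, a union bound over the Chebyshev event and the good event gives probability at least $9/10$.
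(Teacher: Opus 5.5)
Your route is correct and shares the paper's skeleton: a Chebyshev start at scale $n^{5/2}$, greedy toggling with the exact current increments, and a norm recursion over $O_\delta(n^{3/2})$ steps. The difference is how the directional supply is maintained.

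The paper never controls individual increments. It bounds the average $\binom{|A|}{2}^{-1}\sum_{jk\subseteq A}\max\{u\cdot E_0(X_j,X_k),0\}$ from below in expectation by conditional Jensen, concentrates it with McDiarmid, and makes it uniform in $u$ with a net. It then notes that one toggle moves this average by only $O(n^{-2})$, since the toggle alters $O(n)$ candidate increments by $O(1/n)$ each. So the supply survives $O(n^{3/2})$ toggles with no restriction on reusing vertices.

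You instead discretize $D_W$ into $O_\delta(1)$ level cells, which gives uniformity in $u$ without a net. You then control each chosen increment pointwise through Hoeffding bounds on all degrees and codegrees, and the price is the per-vertex cap. That cap needs no matching schedule. After $T=O_\delta(n^{3/2})$ moves, at most $2T/(M\sqrt n)=O_\delta(n/M)$ vertices are capped, and they block $O_\delta(n^2/M)$ pairs. This is fewer than the $\Omega_\delta(n^2)$ fresh edges and non-edges in any cell of measure $\Omega_\delta(1)$, once $M=M(\delta)$ is large. Only such cells need the supply property. Requiring it for every cell of positive measure would not give constants depending only on $\delta$.

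One normalization must be stated explicitly: weight the edge coordinate of $e$ by $n-2$. Then a toggle changes $e$ by approximately $\pm(n-2)D_W(X_i,X_j)$ in all three coordinates; up to sign, your $e$ is $(n-2)r_i(W_n)$ in the paper's notation. With raw edge counts, the bound $u\cdot g\ge c\eta^2 n$ fails for $u$ near $(1,0,0)$, since a toggle moves that coordinate by only $\pm1$. Stopping inside a ball of radius $O_\delta(n)$ would then leave an edge-count error of order $n$. That gives an error of order $n^2$ in the induced one-edge count, which equals $(n-2)$ times the edge count minus twice the $2$-star count plus three times the triangle count.
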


\begin{proof}
By \cref{lem:orthogonal-decomposition}, each induced $3$-vertex count in $W_n$ has variance $O(n^5)$. Chebyshev's inequality and a union bound give simultaneous $O(n^{5/2})$ deviations from expectation with probability arbitrarily close to one. We keep the vertex variables fixed and change only the pair variables $Y_{jk}$. We use the edge response $D_W$ to establish a supply of useful actual increments in every direction, then show deterministically that this supply persists during the modifications.

Let $W_{n,i}$ be the graph after $i$ modifications, with $W_{n,0}=W_n$. We scale the density errors so that a single toggle changes the edge coordinate by $\pm1$, defining the residual
\[
r_i(W_n)=\binom{n}{2}
\bigl(\mathbbm{E}[t(H,W_n)]-t(H,W_{n,i})\bigr)_{
H\in\{\cedge[0.15]{black},\ctriangle[0.2]{white}{black}{black},\ctriangle[0.2]{black}{black}{black}\}}.
\]
The preceding fluctuation bounds and the count identities imply
$\norm{r_0(W_n)}\le Cn^{3/2}$ with probability at least $99/100$, by choosing an absolute constant $C$ sufficiently large.
For a candidate pair $jk\subseteq A$, let $E_i(X_j,X_k)$ denote the actual change in the density vector, multiplied by $\binom{n}{2}$, when that pair is toggled in $W_{n,i}$. Here the arguments identify the endpoints; this increment depends on the whole current graph. Counting the affected triples gives
\[
E_i(X_j,X_k)=\bigl(1-2\mathbbm{1}_{jk\in E(W_{n,i})}\bigr)
\begin{pmatrix}
1\\[2pt]
\displaystyle\frac{\sum_{z\in[n]\setminus\{j,k\}}
 (\mathbbm{1}_{jz\in E(W_{n,i})}+\mathbbm{1}_{kz\in E(W_{n,i})})}{n-2}\\[6pt]
\displaystyle\frac{3\sum_{z\in[n]\setminus\{j,k\}}
 \mathbbm{1}_{jz\in E(W_{n,i})}\mathbbm{1}_{kz\in E(W_{n,i})}}{n-2}
\end{pmatrix}.
\]
In particular, $\norm{E_i(X_j,X_k)}\le4$, and toggling this pair replaces $r_i(W_n)$ by $r_i(W_n)-E_i(X_j,X_k)$. In the original sample,
\[
\mathbbm{E}\!\left[E_0(X_j,X_k)\mid
 X_j,X_k,\mathbbm{1}_{jk\in E(W_n)}\right]
 =\bigl(1-2\mathbbm{1}_{jk\in E(W_n)}\bigr)D_W(X_j,X_k).
\]
Both possible states of $jk$ have conditional probability at least $\eta_\delta$. Thus conditional Jensen's inequality and \cref{lem:s3-transformation} give, for every unit vector $u\in\mathbb R^3$,
\[
\mathbbm{E}\bigl[\max\{u\cdot E_0(X_j,X_k),0\}\bigr]
\ge\eta_\delta\int_{[0,1]^2}|u\cdot D_W(x,y)|\,dx\,dy
\ge\frac{\eta_\delta^3}{9}.
\]
Set $c=\eta_\delta^3/36$. We seek a lower bound valid simultaneously in every direction. For a fixed unit vector $u$, apply bounded differences to the following average, whose expectation is at least $4c$:
\[
\binom{|A|}{2}^{-1}\sum_{\substack{j<k\\j,k\in A}}
 \max\{u\cdot E_0(X_j,X_k),0\}.
\]
Toggling one graph edge changes this average by at most $C/n^2$: its own increment changes by at most $8$, and only $O(n)$ other candidate increments can change, each by $O(1/n)$. Changing one vertex variable changes at most $n-1$ graph edges, so its bounded-difference constant is $C/n$; changing one pair variable has constant $C/n^2$. The sum of the squares of these constants is $O(1/n)$. McDiarmid's inequality therefore bounds the probability that the displayed average is below $3c$ by $2\exp(-c^2n/C)$. The average is $4$-Lipschitz in $u$. Applying this estimate to a $c/4$-net of the unit sphere and taking a union bound proves that, with probability $1-o_\delta(1)$, the average is at least $2c$ for every unit vector $u$.

On this event, proceed deterministically and use each candidate pair at most once. The same single-edge bound shows that after any $i$ adaptive toggles, and after omitting the used candidates,
\[
\inf_{\norm{u}=1}\binom{|A|}{2}^{-1}
\sum_{\substack{j<k,\ j,k\in A\\jk\text{ unused}}}
\max\{u\cdot E_i(X_j,X_k),0\}
\ge2c-\frac{Ci}{n^2}.
\]
Indeed, the changes to the average cost at most $Ci/n^2$, and omitting $i$ summands costs at most $4i/\binom{|A|}{2}$, which is absorbed by increasing $C$. For sufficiently large $n$, depending only on $\delta$, the right-hand side is at least $c$ throughout the first $\lceil4Cc^{-1}n^{3/2}\rceil$ steps. Consequently, whenever $\norm{r_i(W_n)}>16/c$, there is an unused pair $jk$ such that
\[
r_i(W_n)\cdot E_i(X_j,X_k)\ge c\norm{r_i(W_n)}.
\]
Toggle this pair. Then
\[
\begin{aligned}
\norm{r_{i+1}(W_n)}^2
&\le\norm{r_i(W_n)}^2-2c\norm{r_i(W_n)}+16\\
&\le\bigl(\norm{r_i(W_n)}-c/2\bigr)^2.
\end{aligned}
\]
Starting from $\norm{r_0(W_n)}\le Cn^{3/2}$, the process therefore stops within the available number of steps with $\norm{r_i(W_n)}\le16/c$. Each of the three density errors is then $O_\delta(n^{-2})$. Their fixed linear relations with the induced densities give an $O_\delta(n)$ error for every induced $3$-vertex count. The fluctuation and directional-supply events intersect with probability at least $9/10$ for sufficiently large $n$, proving the claim.
\end{proof}

\section{Paired corrections}\label{sec:paired}

After the correction in \cref{lem:first}, every induced $3$-vertex count is within $O_\delta(n)$ of its expectation in the original sample $W_n$. Comparing the identity
$(n-2)\#\cedge[0.1]{black}
=\#\ctriangle[0.2]{black}{white}{white}
+2\#\ctriangle[0.2]{white}{black}{black}
+3\#\ctriangle[0.2]{black}{black}{black}$
with its expectation shows that the edge count is within $O_\delta(1)$ of its expectation. By \cref{lem:s3-arithmetic}, the integer target $N_{\cedge[0.1]{black}}$ differs from that expectation by at most $24$. Thus only $O_\delta(1)$ edge additions or removals are needed. We make this correction inside $B$ before proceeding with modifications that preserve the edge count.

\begin{lemma}[Fixing the edge count]\label{lem:edge-correction}
Fix $C>0$. For sufficiently large $n$, depending only on $\delta,C$, with probability $1-o(1)$ the following holds simultaneously for every integer $0\leq m\leq C$: the edge count of $W_n$ can be increased or decreased by $m$ using $m$ additions or removals inside $B$, respectively, with every induced $3$-vertex count changing by at most $m(n-2)$.
\end{lemma}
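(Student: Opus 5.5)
The lemma is essentially a counting statement: inside $B$ there are many edges and many non-edges with high probability. The bound on induced $3$-vertex counts is deterministic, since toggling a single pair $jk$ only affects the $n-2$ triples containing both $j$ and $k$. Each such triple changes its induced isomorphism type, so each induced count changes by at most $n-2$ per toggle. After $m$ toggles, every induced $3$-vertex count has therefore changed by at most $m(n-2)$, whatever the order of the toggles. It remains to show that, with probability $1-o(1)$, $B$ contains at least $C$ edges and at least $C$ non-edges of $W_n$. Then for every $0\le m\le C$ we can add $m$ distinct non-edges inside $B$, or remove $m$ distinct edges inside $B$. The edge count changes by exactly $m$ in either direction, since each toggle changes it by one and distinct pairs are used. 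A single event suffices for all $m\le C$ simultaneously.

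For the probabilistic step, use the margin $\eta\le W\le1-\eta$ from \cref{lem:s3-transformation}, with $\eta=\Theta(\delta)$ from \cref{lem:s3-representatives}. Conditional on the vertex variables $(X_i)$, the indicators $\mathbbm{1}_{jk\in E(W_n)}$ for $j<k$ in $B$ are independent Bernoulli variables. Each has success probability in $[\eta,1-\eta]$. The number of edges inside $B$ therefore stochastically dominates a $\mathrm{Bin}\bigl(\binom{|B|}{2},\eta\bigr)$ variable, and so does the number of non-edges. Since $|B|\ge\lfloor n/4\rfloor$, each binomial has mean $\eta\binom{|B|}{2}=\Omega_\delta(n^2)$. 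A Chernoff bound, or simply Chebyshev, shows each is at least $C$ except with probability $\exp(-\Omega_\delta(n^2))$ once $n$ is large in terms of $\delta$ and $C$. Averaging over $(X_i)$ and taking a union bound over the two events gives probability $1-o(1)$, uniform over the admissible $W$.

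There is no genuine obstacle here; the only point needing care is that the requirement is simultaneous for all $m\le C$. This is handled by proving the single event that at least $C$ edges and at least $C$ non-edges lie inside $B$. The edge and non-edge sets are fixed before any modification, so the choices are non-adaptive. One may also note a point for later use, although the lemma does not require it: the toggles stay inside $B$. By the remark in \cref{sec:first}, they therefore do not alter the increments available to modifications inside $A$, $C$, or $D$.
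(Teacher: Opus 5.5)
Your proposal is correct and follows essentially the same route as the paper: condition on the vertex variables so the edge indicators inside $B$ are independent with probabilities bounded away from $0$ and $1$, use a Chernoff bound to get at least $C$ edges and $C$ non-edges in $B$, and note deterministically that each toggle affects only $n-2$ triples, so one event covers every $m\le C$ and both signs. One minor slip: Chebyshev alone gives only a polynomially small failure probability, not $\exp(-\Omega_\delta(n^2))$, though $o(1)$ is all the lemma needs.
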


\begin{proof}
Conditional on the vertex variables, the edge indicators inside $B$ are independent, with probabilities in $(\eta_\delta,1-\eta_\delta)$. Since $|B|=\Theta(n)$, a Chernoff bound gives at least $C$ edges and at least $C$ non-edges inside $B$ with probability $1-o(1)$, uniformly in the vertex variables. Adding or removing any $m$ of them changes each induced $3$-vertex count by at most $m(n-2)$, since each altered edge lies in exactly $n-2$ triples. The same event works for every $m$ and both signs.
\end{proof}

The modifications inside $A$ leave all edge states inside $B$ unchanged. Applying \cref{lem:edge-correction} with $C$ depending only on $\delta$ therefore makes the edge count exactly $N_{\cedge[0.1]{black}}$, while leaving every induced $3$-vertex count within $O_\delta(n)$ of its expectation in $W_n$. Since the corresponding targets differ from these expectations by at most $2n$, the remaining errors relative to the targets are also $O_\delta(n)$.

The count identities in \cref{thm:main} imply that, once the edge count is fixed, it suffices to correct the counts of $\ctriangle[0.2]{white}{white}{white}$ and $\ctriangle[0.2]{black}{black}{black}$. We do this using swaps, each removing one edge and adding another, so the edge count remains fixed.

From this point until the end of the precise-swap argument, we condition on all vertex variables $(X_i)_{i\in[n]}$. The edge indicators are then independent, with probabilities $W(X_i,X_j)\in(\eta_\delta,1-\eta_\delta)$, and all probability estimates below are uniform in the conditioned values.

\subsection{Local probabilities for a swap}

We need lower bounds for probabilities of prescribed swap increments near their mean, and upper bounds at every lattice point. The contributions from outside vertices are independent but generally not identically distributed, so the following local law allows their distributions to vary.

\begin{lemma}[Two-dimensional local law]\label{lem:local}
Let $\xi_1,\ldots,\xi_\ell$ be independent vectors in $\{-1,0,1\}^2$ such that, for every $j$,
\[
P(\xi_j=(0,0)),\quad P(\xi_j=(1,0)),\quad P(\xi_j=(0,1))\geq\gamma,
\qquad 0<\gamma\leq1/3.
\]
Put $Q=\sum_j\xi_j$. For an absolute constant $C$,
\[
\sup_{z\in\mathbb Z^2}P(Q=z)\leq\frac{C\gamma^{-2}}{\ell}.
\]
For fixed $R\geq1$ and $\ell\geq Ce^{R^2}\gamma^{-10}$, every $z\in\mathbb Z^2$ satisfies
\begin{equation}\label{eq:local-lower}
\lVert z-\mathbbm E Q\rVert\leq R\gamma\sqrt\ell
\quad\Longrightarrow\quad
P(Q=z)\geq\frac{c_R}{\ell},
\qquad c_R=\frac{e^{-R^2/2}}{8\pi}.
\end{equation}
\end{lemma}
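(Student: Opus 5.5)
The plan is to use characteristic functions with a Fourier inversion on $\mathbb Z^2$. Write $\phi_j(\theta)=\mathbb E e^{i\theta\cdot\xi_j}$ for $\theta\in[-\pi,\pi]^2$, so that
\[
P(Q=z)=\frac{1}{4\pi^2}\int_{[-\pi,\pi]^2}e^{-i\theta\cdot z}\prod_{j}\phi_j(\theta)\,d\theta .
\]
The basic estimate comes from the three atoms $(0,0),(1,0),(0,1)$, each of mass at least $\gamma$. Comparing pairs of atoms gives
\[
1-|\phi_j(\theta)|^2\ \ge\ c\gamma^2\bigl(\|\theta_1\|^2+\|\theta_2\|^2\bigr),
\]
where $\|\cdot\|$ is distance to $2\pi\mathbb Z$. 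Indeed, $|\phi|^2=\sum_{a,b}p_ap_b\cos(\theta\cdot(a-b))$, and the pairs $\{(0,0),(1,0)\}$ and $\{(0,0),(0,1)\}$ each contribute a loss of at least $2\gamma^2(1-\cos\theta_k)$. Hence
\[
\prod_j|\phi_j|\le\exp\bigl(-c\gamma^2\ell|\theta|^2\bigr)\quad\text{on }[-\pi,\pi]^2 .
\]
Integrating this bound gives $\sup_z P(Q=z)\le C\gamma^{-2}/\ell$ directly, which proves the upper bound.

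For the lower bound I would run the usual local CLT, with care about non-identical summands and the weak lattice structure. Write $\Sigma=\sum_j\operatorname{Cov}(\xi_j)$. The same atom argument gives $\Sigma\succeq c\gamma^2\ell I$, and trivially $\Sigma\preceq 2\ell I$. Split the integral at $|\theta|\le\rho$ with $\rho$ a small power of $\gamma$.
\begin{itemize}
\item \emph{Inner region.} Expand $\log\phi_j(\theta)=i\theta\cdot\mathbb E\xi_j-\tfrac12\theta^\top\operatorname{Cov}(\xi_j)\theta+O(|\theta|^3)$. After centring at $\mathbb E Q$, the product equals $e^{-\frac12\theta^\top\Sigma\theta}\bigl(1+O(\ell|\theta|^3)\bigr)$. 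Integrating the Gaussian main term gives $\bigl(2\pi\sqrt{\det\Sigma}\bigr)^{-1}\exp\bigl(-\tfrac12 w^\top\Sigma^{-1}w\bigr)$, where $w=z-\mathbb E Q$.
\item \emph{Error term.} On the scale $|\theta|\sim(\gamma^2\ell)^{-1/2}$ the error is $O\bigl(\ell(\gamma^2\ell)^{-3/2}\bigr)$ relative to the main term. Integrated against the Gaussian, it contributes $O\bigl(\gamma^{-5}\ell^{-3/2}\bigr)$ after normalising.
\item \emph{Outer region.} The tail $|\theta|>\rho$ is bounded by $\exp(-c\gamma^2\ell\rho^2)$, which is negligible.
\end{itemize}

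Next, the quantitative constants. Since $\|w\|\le R\gamma\sqrt\ell$ and $\Sigma\succeq c\gamma^2\ell I$, we get $w^\top\Sigma^{-1}w\le R^2/c$. To obtain exactly $e^{-R^2/2}$ one needs the sharper comparison $\Sigma\succeq\gamma^2\ell I$, which I would prove from the atom masses: the variance of $u\cdot\xi_j$ is at least $\gamma^2$ times a combination of $u_1^2$, $u_2^2$ and $(u_1-u_2)^2$, which is $\ge\gamma^2$ up to constants. If that fails I would adjust the constant inside the exponent. The main term is then at least
\[
\frac{e^{-R^2/2}}{2\pi\sqrt{\det\Sigma}}\ \ge\ \frac{e^{-R^2/2}}{4\pi\ell},
\]
using $\det\Sigma\le 4\ell^2$. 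Requiring the error $O(\gamma^{-5}\ell^{-3/2})$ to be at most half of this gives $\ell\ge Ce^{R^2}\gamma^{-10}$, which is exactly the stated hypothesis and yields $c_R=e^{-R^2/2}/(8\pi)$.

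The main obstacle is making this error bookkeeping uniform in $\gamma$, and in particular confirming that the Gaussian comparison matrix gives the exact exponent $R^2/2$ rather than $R^2/(2c)$. I would handle this by choosing the inner cutoff $\rho=\gamma^{2}\ell^{-1/3}$-type scaling and tracking powers of $\gamma$ carefully.
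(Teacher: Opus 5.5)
Your proposal is correct and follows essentially the paper's route: the atom-based bound $|\phi_j(t)|\le e^{-c\gamma^2\lVert t\rVert^2}$ with Fourier inversion gives the upper bound, a Gaussian comparison has error $O(\gamma^{-5}\ell^{-3/2})$, and the sandwich $\gamma^2\ell I\preceq\Sigma\preceq2\ell I$ finishes the argument. Your pair-of-atoms computation gives exactly $\operatorname{Var}(u\cdot\xi_j)\ge\gamma^2\bigl(u_1^2+u_2^2+(u_1-u_2)^2\bigr)=\gamma^2(2-2u_1u_2)\ge\gamma^2$ for unit $u$, so the exponent $R^2/2$ needs no adjustment.

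The one difference is that the paper avoids the inner/outer split by telescoping $|\phi_j(t)-e^{-t^{\top}\Cov(\xi_j)t/2}|\le C\lVert t\rVert^3$ over the whole square. If you do split, take $\rho=c\gamma^2$ independent of $\ell$: bound $|e^{x}-1|\le|x|e^{|x|}$ with $|x|\le C\ell\lVert\theta\rVert^3$, and absorb $e^{|x|}$ into half of the Gaussian decay. Your alternative cutoff $\rho=\gamma^2\ell^{-1/3}$ does not work, because it leaves an outer tail of order $\gamma^{-2}\ell^{-1}e^{-c\gamma^6\ell^{1/3}}$. That tail is not small under only $\ell\ge Ce^{R^2}\gamma^{-10}$.
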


\begin{proof}
Put $\Sigma=\Cov(Q)$ and
$\phi_j(t)=\mathbbm E e^{\mathrm{i}t\cdot(\xi_j-\mathbbm E\xi_j)}$.
The three specified atoms give
\[
\gamma^2 I\preceq\Cov(\xi_j)\preceq2I,
\qquad
|\phi_j(t)|\leq e^{-c\gamma^2\lVert t\rVert^2}
\quad(t\in[-\pi,\pi]^2),
\]
where $c>0$ is absolute. Fourier inversion gives the point-mass upper bound.

Boundedness and Taylor's formula give
$|\phi_j(t)-e^{-t^T\Cov(\xi_j)t/2}|\leq C\lVert t\rVert^3$
on the same square. Both factors have modulus at most
$e^{-c\gamma^2\lVert t\rVert^2}$.
Telescoping their products and applying Fourier inversion, including the Gaussian tail outside the square, gives, for $\ell\geq2$,
\begin{equation}\label{eq:local-approx}
\sup_{z\in\mathbb Z^2}\left|P(Q=z)-
\frac{e^{-\frac12(z-\mathbbm E Q)^T\Sigma^{-1}(z-\mathbbm E Q)}}
{2\pi\sqrt{\det\Sigma}}\right|
\leq C\gamma^{-5}\ell^{-3/2}.
\end{equation}
Finally, $\gamma^2\ell I\preceq\Sigma\preceq2\ell I$.
If $\lVert z-\mathbbm E Q\rVert\leq R\gamma\sqrt\ell$, the Gaussian term is at least $e^{-R^2/2}/(4\pi\ell)$.
For $\ell\geq Ce^{R^2}\gamma^{-10}$, the error is at most half this quantity, proving \eqref{eq:local-lower}.
\end{proof}

We apply \cref{lem:local} to the change caused by a swap.

\begin{lemma}[Effect of a swap]\label{lem:swap-change}
Let $ij\neq lk$ be two pairs of vertices in $W_n$, not necessarily disjoint. To define $D_{ij,lk}(W_n)$, first force $ij$ to be present and $lk$ to be absent, leaving all other edges as in $W_n$, and then measure the change to $(\#\ctriangle[0.2]{white}{white}{white},\#\ctriangle[0.2]{black}{black}{black})$ when removing $ij$ and adding $lk$. Thus $D_{ij,lk}(W_n)$ is defined for every sample, is independent of the two candidate edge indicators conditional on the vertex variables, and equals the actual swap increment whenever $ij\in E(W_n)$ and $lk\notin E(W_n)$. For any $z\in\mathbb Z^2$ satisfying $\lVert z-\mathbbm E[D_{ij,lk}(W_n)]\rVert=O(\sqrt n)$, we have
\[
 P(D_{ij,lk}(W_n)=z)=\Theta_\delta(n^{-1}),
\]
uniformly in the vertex variables, the candidate pair, and $z$ in a fixed such range. Moreover, the upper bound $O_\delta(n^{-1})$ holds for every $z\in\mathbb Z^2$. For every fixed $R\ge1$, the lower bound can be taken to be $c_{2R+2}/n$ whenever
\[
 \lVert z-\mathbbm E[D_{ij,lk}(W_n)]\rVert\le R\eta_\delta^4\sqrt n,
\]
for sufficiently large $n$ depending on $\delta,R$.
\end{lemma}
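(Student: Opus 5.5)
The plan is to write $D_{ij,lk}(W_n)$ as a sum of independent per-vertex contributions and apply \cref{lem:local}. Throughout, we condition on the vertex variables, so all edge indicators are independent with probabilities in $(\eta_\delta,1-\eta_\delta)$. Let $V_0=\{i,j,l,k\}$, which has $3$ or $4$ vertices. Every $3$-set whose count changes under the swap contains $ij$ or $lk$. So the triples affected are of two kinds: those $\{i,j,z\}$ or $\{l,k,z\}$ with $z\notin V_0$, and $O(1)$ triples inside $V_0$.

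For $z\notin V_0$, define $\xi_z$ as the combined change at the triples through $z$. Removing $ij$ (forced present) from $\{i,j,z\}$ does two things: it decreases the triangle count by $\mathbbm 1_{iz}\mathbbm 1_{jz}$, and it increases the empty count by $(1-\mathbbm 1_{iz})(1-\mathbbm 1_{jz})$. Adding $lk$ (forced absent) acts symmetrically. Hence
\[
\xi_z=\Bigl((1-\mathbbm 1_{iz})(1-\mathbbm 1_{jz})-(1-\mathbbm 1_{lz})(1-\mathbbm 1_{kz}),\ \mathbbm 1_{lz}\mathbbm 1_{kz}-\mathbbm 1_{iz}\mathbbm 1_{jz}\Bigr)\in\{-1,0,1\}^2 .
\]
Each $\xi_z$ depends only on the edges from $z$ to $V_0$, so the $\xi_z$ are independent over $z$. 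They are also independent of the candidate indicators and of all edges inside $V_0$.

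The remaining term is $\zeta$, the contribution of triples inside $V_0$. It depends only on the $O(1)$ edges inside $V_0$ other than $ij,lk$, and it is bounded by an absolute constant. This gives
\[
D_{ij,lk}(W_n)=\zeta+Q,\qquad Q=\sum_{z\notin V_0}\xi_z,
\]
with $\zeta$ independent of $Q$ and $\ell=n-|V_0|\geq n-4$ summands. Note that $\lVert\mathbbm E D-\mathbbm EQ\rVert=O(1)$.

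Next we verify the atom hypothesis of \cref{lem:local} with $\gamma=\eta_\delta^4$ (taking $\eta_\delta\le 1/2$, say). In the disjoint case we exhibit one edge configuration of the four pairs $iz,jz,lz,kz$ for each required atom; each configuration has probability at least $\eta_\delta^4$.
\begin{itemize}
\item For $(0,0)$: take $iz\in E$, $jz\notin E$, $lz\in E$, $kz\notin E$.
\item For $(1,0)$: take $iz,jz\notin E$, $lz\in E$, $kz\notin E$.
\item For $(0,1)$: take $lz,kz\in E$, $iz\notin E$, $jz\in E$.
\end{itemize}
When the pairs share a vertex, say $j=l$, only the three pairs $iz,jz,kz$ are involved.
\begin{itemize}
\item For $(1,0)$: take $iz,jz\notin E$ and $kz\in E$.
\item For $(0,1)$: take $jz,kz\in E$ and $iz\notin E$.
\item For $(0,0)$: take $iz,kz\in E$ and $jz\notin E$.
\end{itemize}
Each of these has probability at least $\eta_\delta^3\ge\eta_\delta^4$. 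The other coincidence patterns are symmetric.

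\cref{lem:local} now gives the upper bound. For every $z$,
\[
P(D=z)=\sum_w P(\zeta=w)\,P(Q=z-w)\le C\eta_\delta^{-8}/\ell=O_\delta(n^{-1}).
\]

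For the lower bound, condition on $\zeta=w$; every value $w$ satisfies $\lVert w\rVert\le K$ for an absolute $K$. Suppose $\lVert z-\mathbbm E D\rVert\le R\eta_\delta^4\sqrt n$. Then
\[
\lVert z-w-\mathbbm EQ\rVert\le R\gamma\sqrt n+2K\le(2R+2)\gamma\sqrt\ell
\]
for $n$ large depending on $\delta,R$, using $\sqrt n\le 2\sqrt\ell$ and $2K\le 2\gamma\sqrt\ell$. Apply \eqref{eq:local-lower} with radius $2R+2$; its condition $\ell\ge Ce^{(2R+2)^2}\gamma^{-10}$ holds for large $n$. This gives $P(Q=z-w)\ge c_{2R+2}/\ell\ge c_{2R+2}/n$, and averaging over $w$ yields $P(D=z)\ge c_{2R+2}/n$. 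The $\Theta_\delta(n^{-1})$ statement for $\lVert z-\mathbbm ED\rVert\le M\sqrt n$ follows by taking $R=M\eta_\delta^{-4}$.

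All constants depend only on $\eta_\delta$, $R$ and absolute quantities, which gives uniformity in the vertex variables, the pair, and $z$. The independence claims follow because $D$ never reads the indicators of $ij$ and $lk$.

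The only delicate step is the atom verification, in particular checking that the shared-vertex cases still produce all three atoms. Everything else is bookkeeping of the $O(1)$ shift $\zeta$, which the enlarged radius $2R+2$ absorbs.
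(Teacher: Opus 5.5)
Your proposal is correct and follows essentially the same route as the paper: you split $D_{ij,lk}(W_n)$ into independent outside-vertex contributions in $\{-1,0,1\}^2$ plus a bounded internal term, and you verify the atom condition of \cref{lem:local} with $\gamma=\eta_\delta^4$ in both the disjoint and shared-vertex cases. You then absorb the $O(1)$ internal shift by enlarging the radius to $2R+2$; your explicit atom configurations and the convolution over the internal term only spell out details the paper leaves implicit.
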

\begin{proof}
First suppose that $ij,lk$ are disjoint. Each vertex $m\notin\{i,j,l,k\}$ contributes
\[
\bigl(
\mathbbm{1}_{im,jm\notin E(W_n)}-\mathbbm{1}_{lm,km\notin E(W_n)},
\mathbbm{1}_{lm,km\in E(W_n)}-\mathbbm{1}_{im,jm\in E(W_n)}
\bigr)
\]
to $D_{ij,lk}(W_n)$. These contributions are independent over $m$ and independent of the bounded total contribution of the four triples contained in $\{i,j,l,k\}$. Each outside contribution lies in $\{-1,0,1\}^2$ and assigns probability at least $\eta_\delta^4$ to each of $(0,0),(1,0),(0,1)$: each value is obtained by prescribing a suitable configuration of the four relevant edge indicators. None uses either candidate edge indicator.

If the two edges share a vertex, write them as $ij,ik$. Their common triple keeps the same number of edges and contributes zero. For an outside vertex $m$, its contribution is
\[
\bigl(
(1-\mathbbm1_{im\in E(W_n)})(\mathbbm1_{km\in E(W_n)}-\mathbbm1_{jm\in E(W_n)}),
\mathbbm1_{im\in E(W_n)}(\mathbbm1_{km\in E(W_n)}-\mathbbm1_{jm\in E(W_n)})
\bigr).
\]
These contributions are again independent over $m$. Prescribing $jm$ absent and $km$ present gives the positive unit atoms according to the state of $im$, each with probability at least $\eta_\delta^3$; equal states of $jm,km$ give the zero atom. Thus the lower atom bound $\eta_\delta^4$ also holds in this case.

Condition additionally on the edges inside the candidate support and apply \cref{lem:local}, with $\gamma=\eta_\delta^4$, to the $n-O(1)$ outside summands. The internal contribution shifts the relevant center by only $O(1)$, so averaging over these internal edges gives the asserted bounds. For the quantitative lower bound, this shift and the change from $n$ to $n-O(1)$ are absorbed by using radius parameter $2R+2$ for sufficiently large $n$.
\end{proof}

\subsection{A supply of useful pairs}

The lower bound in \cref{lem:swap-change} applies to increments near their mean. To obtain useful increments near zero, we choose many candidate pairs whose mean increments are $O(1)$.

\begin{lemma}[Pairs with small mean]\label{lem:many-pairs}
Let $V\subset[n]$ with $|V|=\Theta(n)$. Using only the vertex variables, we can choose $\Theta(n^2)$ ordered pairs $(e,e')$ of distinct possible edges in $V$, with each edge appearing in at most one pair, such that
\[
 \mathbbm E[D_{e,e'}(W_n)]=O(1)
\]
uniformly over the pairs. With probability $1-o(1)$, $\Theta_\delta(n^2)$ of these pairs also satisfy $e\in E(W_n)$ and $e'\notin E(W_n)$.
\end{lemma}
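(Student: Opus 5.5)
We compute the conditional mean of the swap increment, conditional on the vertex variables, and then choose pairs whose means nearly cancel. For disjoint pairs $e=ij$ and $e'=lk$, the formula in the proof of \cref{lem:swap-change} gives
\[
\mathbbm E[D_{e,e'}(W_n)]=\sum_{m}\Bigl(g_0(X_i,X_j;X_m)-g_0(X_l,X_k;X_m),\;g_1(X_l,X_k;X_m)-g_1(X_i,X_j;X_m)\Bigr)+O(1),
\]
where $g_0(x,y;w)=(1-W(x,w))(1-W(y,w))$ and $g_1(x,y;w)=W(x,w)W(y,w)$. So $\mathbbm E[D_{e,e'}]=\Phi(e')-\Phi(e)+O(1)$ for the vector
\[
\Phi(ij)=\Bigl(-\sum_m g_0(X_i,X_j;X_m),\ \sum_m g_1(X_i,X_j;X_m)\Bigr)\in[-n,n]^2 .
\]
This vector depends only on the vertex variables. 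The task therefore reduces to a pigeonhole problem: match many edges $e$ with partners $e'$ having $\Phi(e')-\Phi(e)=O(1)$.

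The pigeonhole step runs as follows.
\begin{itemize}
\item Partition $[-n,n]^2$ into unit squares, giving $O(n^2)$ cells.
\item There are $\binom{|V|}{2}=\Theta(n^2)$ possible edges in $V$. Within each cell, pair off the edges arbitrarily into disjoint ordered pairs; at most one edge per cell is left over.
\item Leftovers number at most $O(n^2)$, but with a bad constant. To fix this, first restrict to a subset $V'\subseteq V$ of edges landing in a bounded region, or simply use cells of side $K$ for a large constant $K$. Then the number of cells is at most $(2n/K)^2+O(n)$, while the number of edges is at least $c'n^2$. Choosing $K$ with $4/K^2<c'/2$ leaves at least $c'n^2/2$ edges paired, i.e.\ $\Theta(n^2)$ pairs.
\end{itemize}
Within a cell $\|\Phi(e)-\Phi(e')\|\le\sqrt2K=O(1)$, so $\mathbbm E[D_{e,e'}]=O(1)$ uniformly. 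Pairs sharing a vertex have the same form up to an $O(1)$ correction, so they are harmless. Each edge appears in at most one pair, and the choice uses only the vertex variables.

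For the second claim, condition on the vertex variables. The pairs are disjoint as edges, so the indicators $Z_{(e,e')}=\mathbbm1_{e\in E}\mathbbm1_{e'\notin E}$ are independent across pairs. Each has probability at least $\eta_\delta^2$. Their sum has mean at least $\eta_\delta^2\cdot\Theta(n^2)$, so a Chernoff bound gives at least half the mean with probability $1-\exp(-\Omega_\delta(n^2))$, uniformly in the vertex variables.

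The main point needing care is the constant-counting in the pigeonhole step: the cell size $K$ must be chosen large relative to $|V|^2/n^2$ so that leftovers do not consume all edges. This is straightforward, since $|V|=\Theta(n)$ fixes $c'$ independently of $\delta$.
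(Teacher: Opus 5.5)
Your proposal is correct and is essentially the paper's proof: you bucket the vertex-determined expected responses $\Phi(e)$ into bounded squares, few enough that they number only a small fraction of the candidate edges. You then pair edges within each square so the mean increments cancel up to $O(1)$ (with an $O(1)$ correction for incident pairs), and conclude with a Chernoff bound using independence across edge-disjoint pairs. The only slip is cosmetic: in your closing remark $K$ must be large relative to $n^2/|V|^2$, not $|V|^2/n^2$, though your explicit condition $4/K^2<c'/2$ is the correct one.
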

\begin{proof}
For each possible edge $ij\subset V$, consider the vector
\[
\left(
\sum_{m\in[n]\setminus\{i,j\}}(1-W(X_i,X_m))(1-W(X_j,X_m)),
\sum_{m\in[n]\setminus\{i,j\}}W(X_i,X_m)W(X_j,X_m)
\right)\in[0,n]^2.
\]
Its coordinates are the expected numbers of empty triples destroyed and triangles created when the edge is added. Partition $[0,n]^2$ into at most $cn^2$ squares of bounded side length, for a sufficiently small fixed $c>0$ depending only on the lower bound for $|V|/n$. Within each square, pair consecutive edges in a fixed order, taking the earlier edge first. At most one edge per square remains unpaired, so $\Theta(n^2)$ ordered pairs are obtained and no edge is reused.

For disjoint paired edges, the expected swap increment is the difference of their first coordinates and the reverse difference of their second coordinates. For incident paired edges, omitting the common triple changes this comparison by only $O(1)$. Consequently every chosen pair has $\mathbbm E[D_{e,e'}(W_n)]=O(1)$.

Each event $e\in E(W_n),e'\notin E(W_n)$ has probability at least $\eta_\delta^2$. These events are independent across the chosen pairs because no edge is reused. A Chernoff bound therefore gives $\Theta_\delta(n^2)$ pairs in the prescribed state with probability $1-o(1)$. For each pair, its prescribed state is independent of its increment by \cref{lem:swap-change}.
\end{proof}

\section{Coarse Swaps}\label{sec:coarse}

After the first-order and edge-count corrections, every induced $3$-vertex count is within $O_\delta(n)$ of its target. We now reduce the errors in the empty-triple and triangle counts to at most $2\eta_\delta^4\sqrt n$, while preserving the edge count. Only $O_\delta(\sqrt n)$ swaps with increments of order $\sqrt n$ are needed. The supply from \cref{lem:many-pairs,lem:swap-change} will let us choose these swaps with disjoint vertex supports, so their increments remain unchanged throughout the correction.

\begin{lemma}[Coarse swaps]\label{lem:many-swaps}
For any fixed implied bound in $|a|+|b|=O_\delta(n)$, with probability $1-o(1)$ the following holds simultaneously for every integer pair $a,b$ in that range. There are $O_\delta(\sqrt n)$ swaps with pairwise disjoint vertex supports inside $C$ such that the resulting graph $W_n'$ satisfies
\[
\begin{gathered}
|\#\ctriangle[0.3]{white}{white}{white}(W_n')-\#\ctriangle[0.3]{white}{white}{white}(W_n)-a|\le 2\eta_\delta^4\sqrt n,\\
|\#\ctriangle[0.3]{black}{black}{black}(W_n')-\#\ctriangle[0.3]{black}{black}{black}(W_n)-b|\le 2\eta_\delta^4\sqrt n,\qquad
\#\cedge[0.2]{black}(W_n')=\#\cedge[0.2]{black}(W_n).
\end{gathered}
\]
\end{lemma}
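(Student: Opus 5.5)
Conditional on the vertex variables, we apply \cref{lem:many-pairs} with $V=C$. This gives $\Theta(n^2)$ ordered candidate pairs $(e,e')$ inside $C$ with $\mathbbm E[D_{e,e'}(W_n)]=O(1)$. We then extract a large subfamily whose vertex supports are pairwise disjoint. Since each vertex of $C$ lies in $O(n)$ candidate edges, a greedy selection, or a random selection followed by deletion of conflicts, keeps $\Theta(n)$ pairwise vertex-disjoint pairs. Each selected swap touches only four vertices. Two disjoint swaps share no triple containing both a modified edge of one and a modified edge of the other, since a triple cannot contain two disjoint edges. Hence the increment of each swap is the same before and after performing the others. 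The total change in $(\#\ctriangle[0.2]{white}{white}{white},\#\ctriangle[0.2]{black}{black}{black})$ is then exactly the sum of the individual $D_{e,e'}(W_n)$, and the edge count is preserved.

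The key step is to show that, with probability $1-o(1)$, the selected family contains increments in every \emph{direction} of size about $\sqrt n$, in the correct edge state. Fix $R$ and partition the disc of radius $R\eta_\delta^4\sqrt n$ around the origin into $O(1)$ cells of diameter at most $\eta_\delta^4\sqrt n$. By \cref{lem:swap-change}, with $\mathbbm E D=O(1)$, each cell contains $\Theta(\sqrt n)^2$ lattice points, and each point has probability at least $c_{2R+2}/n$. So each cell is hit with probability $\Theta_\delta(1)$. The prescribed edge state has probability at least $\eta_\delta^2$, independently of $D$. The pairs use disjoint edges, but their increments share outside edges, so they are not independent. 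To handle this, I would bound the variance of the number of pairs hitting a given cell. For two vertex-disjoint pairs, the dependence comes only through the $O(1)$ edges joining the two supports and the edges to common outside vertices. A Fourier argument in the spirit of \cref{lem:local} should show that the joint law is close to the product law, with covariance $o(1)$. Chebyshev's inequality then gives $\Theta_\delta(n)$ usable swaps in every cell. Alternatively, one can avoid the covariance estimate by applying McDiarmid's inequality to the count, conditional on the vertex variables, as in \cref{lem:first}.

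On this event the construction is deterministic and works uniformly in $(a,b)$. Starting from the residual $(a,b)$ with norm $O_\delta(n)$, we greedily pick an unused swap from the cell most aligned with the residual. Its increment $d$ has $\lVert d\rVert\asymp\eta_\delta^4\sqrt n$ and makes an angle of at most a small constant with the residual. While the residual exceeds $R'\eta_\delta^4\sqrt n$, each step therefore reduces its norm by $\Omega(\eta_\delta^4\sqrt n)$. After $O_\delta(\sqrt n)$ steps, far fewer than the $\Theta(n)$ available per cell, the residual lies in a ball of radius $O(\eta_\delta^4\sqrt n)$. One final swap from a cell of diameter at most $\eta_\delta^4\sqrt n$ containing the residual brings each coordinate within $2\eta_\delta^4\sqrt n$. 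For this last step we choose $R$ large enough that the residual ball is covered by the cells, and make those cells fine enough.

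The main obstacle is the simultaneous occupancy of all cells, given the dependence between increments through shared outside edges. I would attempt the covariance or McDiarmid route first.
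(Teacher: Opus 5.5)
Your plan is correct in outline and follows the paper's skeleton. It takes candidates from \cref{lem:many-pairs} and uses the local lower bound of \cref{lem:swap-change} to make target regions likely. It then counts usable candidates by a second-moment argument, uses disjoint supports so increments add exactly, and runs a deterministic greedy reduction valid for all $(a,b)$ on one event. It differs in two organizational choices.

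First, you preselect a vertex-disjoint family of size $\Theta(n)$ before exposing edges. The paper instead keeps all $\Theta(n^2)$ candidates and enforces disjointness during the correction. Each chosen swap excludes at most $4n$ candidates, so $O_\delta(\sqrt n)$ choices discard only $O_\delta(n^{3/2})=o(n^2)$. The price is that the paper must also absorb the $O(n^3)$ ordered candidate pairs with intersecting supports into the variance, which your preselection avoids.

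Second, you use many small angular cells, a Euclidean greedy step, and a final step into the cell containing the residual. The paper uses only four rectangles, each coordinate in $[1,2]$ or $[-2,-1]$ in units of $\eta_\delta^4\sqrt n$, and corrects the two coordinates independently. A coordinate outside $[-2,2]$ in these units shrinks by at least one unit per step, and once inside it stays there. This gives the stated tolerance directly, with no tuning of $R,R'$. Your scheme is heavier, but could reach any fixed fraction of $\eta_\delta^4\sqrt n$.

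The step you leave open, the covariance bound, is easier than you suggest, and your description of the dependence is inaccurate. Conditional on the vertex variables, the edges from two disjoint supports to a common outside vertex are distinct independent indicators, so they create no dependence. The only shared variables are the at most sixteen edges joining the two supports. Conditioning on these makes the two usefulness events independent. Changing them moves each increment by $O(1)$. So the conditional probability of hitting a cell changes by at most the mass of a boundary strip of width $O(1)$. That strip contains $O(\sqrt n)$ lattice points, each of probability $O_\delta(n^{-1})$. The covariance is therefore $O_\delta(n^{-1})$, with no Fourier analysis needed; this is exactly the paper's argument.

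Your McDiarmid fallback does not work on the raw count. One edge flip can toggle two indicators, and $\Theta(n^2)$ edges are relevant. Bounded differences therefore only control deviations of order $n$, the same order as the mean. It would work only after replacing cell indicators by Lipschitz bumps at scale $\eta_\delta^4\sqrt n$. The analogous step in \cref{lem:first} succeeds precisely because the functional there is Lipschitz in the increments.
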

\begin{proof}
Apply \cref{lem:many-pairs} inside $C$, obtaining candidate pairs $(e_i,e_i')_{i\in[m]}$, where $m=\Theta(n^2)$, chosen using only the vertex variables. Their expected increments are uniformly $O(1)$. We count candidates in the prescribed state with increments in each sign quadrant. For $x<y$ and $z<w$, put
\[
W_{x,y,z,w}(W_n)=\sum_{i\in[m]}\mathbbm1\left[
\begin{gathered}
e_i\in E(W_n),\quad e_i'\notin E(W_n),\\
D_{e_i,e_i'}(W_n)\in[x\eta_\delta^4\sqrt n,y\eta_\delta^4\sqrt n]\times[z\eta_\delta^4\sqrt n,w\eta_\delta^4\sqrt n]
\end{gathered}\right].
\]
Use the four choices
\[
(x,y,z,w)\in\{(1,2,1,2),\ (-2,-1,1,2),\ (1,2,-2,-1),\ (-2,-1,-2,-1)\}.
\]
For sufficiently large $n$, each rectangle contains $\Theta(\eta_\delta^8 n)$ lattice points within a fixed multiple of $\eta_\delta^4\sqrt n$ of each candidate's mean. By \cref{lem:swap-change}, the probability of each rectangle is $\Omega(\eta_\delta^8)$. The prescribed state is independent of the increment and has probability at least $\eta_\delta^2$. Thus each candidate is useful with probability $\Omega(\eta_\delta^{10})$, and $\mathbbm E[W_{x,y,z,w}(W_n)]=\Theta_\delta(n^2)$.

Two pairs with disjoint vertex supports need not have independent increments, since they share the at most sixteen edges joining their supports. Condition on those edges. All remaining indicators used by one pair are distinct from those used by the other, so their usefulness indicators are conditionally independent. Changing the exposed edges moves either rectangle event by only a bounded amount. After also fixing the internal edges of a support, the remaining increment is a sum of $n-O(1)$ independent terms as in \cref{lem:swap-change}. A bounded movement of the rectangle boundary crosses $O(\sqrt n)$ lattice points, each having probability $O_\delta(n^{-1})$. Thus the conditional probability of usefulness varies by $O_\delta(n^{-1/2})$ as the exposed edges vary. The covariance of the two usefulness indicators is consequently $O_\delta(n^{-1})$.

Each vertex belongs to at most $n$ candidate pairs, since candidate edges are not reused. There are therefore $O(n^3)$ ordered pairs of candidates whose supports intersect. Bounding their covariances by one and using the preceding bound for disjoint supports gives $
\operatorname{Var}(W_{x,y,z,w}(W_n))=O_\delta(n^3)$.

Chebyshev's inequality and a union bound over the four choices show that, with probability $1-o(1)$, each of the four classes contains $\Theta_\delta(n^2)$ usable pairs.

On this event, choose swaps with pairwise disjoint vertex supports. Each choice excludes at most $4n$ candidates, since its support has at most four vertices and each vertex belongs to at most $n$ candidate pairs. Thus $O_\delta(\sqrt n)$ choices exclude only $O_\delta(n^{3/2})=o(n^2)$ candidates and cannot exhaust any of the four classes for sufficiently large $n$. Every remaining candidate retains its original edge states and increment: no triple contains altered edges from two disjoint supports.

Starting with residuals $a,b$, choose at each step the sign quadrant pointing toward both current residuals, with either sign allowed for a zero coordinate. Each coordinate increment has absolute value between $\eta_\delta^4\sqrt n$ and $2\eta_\delta^4\sqrt n$. Once a residual coordinate enters $[-2\eta_\delta^4\sqrt n,2\eta_\delta^4\sqrt n]$, it stays there; while outside, its absolute value decreases by at least $\eta_\delta^4\sqrt n$ per step. Consequently at most
\[
2+\frac{|a|+|b|}{\eta_\delta^4\sqrt n}=O_\delta(\sqrt n)
\]
swaps suffice. Each swap preserves the edge count. On the same supply event, this deterministic argument works for every $a,b$ in the stated range. Changes inside $A$ or $B$ alter neither the candidate states nor their increments, by the same disjoint-support observation, so the assertion also applies to residuals determined by the preceding corrections.
\end{proof}

\section{Precise swaps}\label{sec:precise}

After the corrections inside $A,B,C$ from \cref{lem:first,lem:edge-correction,lem:many-swaps}, the edge count is exactly its target, and each of the counts $(\#\ctriangle[0.2]{white}{white}{white},\#\ctriangle[0.2]{black}{black}{black})$ is within $2\eta_\delta^4\sqrt n$ of its target. We now seek two swaps inside $D$ whose increments sum to the remaining correction. Since this correction depends on the preceding stages, such swaps must be available simultaneously for every integer target in this range.

We choose two families of $\Theta(n^{0.6})$ candidate swaps, with disjoint vertex supports across both families. Exposing the edges among all their endpoints makes the remaining increments conditionally independent; a concentration estimate keeps their conditional means near zero. The first family supplies many distinct increments, and the second supplies an increment completing one of them to the required target.

\begin{lemma}[Conditional independence]\label{lem:approx-independence}
Before exposing the edge variables, one can choose pairs $(e_i,e_i')_{i\in[m]}$ contained in $D$, where $m=2\lfloor n^{0.6}\rfloor$, such that
\begin{itemize}
    \item any two pairs are disjoint, not sharing a vertex;
    \item $\mathbbm{E}[D_{e_i,e_i'}(W_n)]=O(1)$ uniformly over all pairs.
\end{itemize}
Expose all edge variables with both endpoints in $\bigcup_{i\in[m]}(e_i\cup e_i')$. With probability
\[
1-O\bigl(e^{-c\eta_\delta^2n^{0.6}}+n^{0.6}e^{-cn^{0.2}}\bigr),
\]
each half of the family contains $\Omega(\eta_\delta^2n^{0.6})$ pairs with $e_i\in E(W_n)$ and $e_i'\notin E(W_n)$. On the same event, conditional on the exposed edges, their changes are mutually independent, and for every $z\in\mathbb Z^2\cap[-4\eta_\delta^4\sqrt n,4\eta_\delta^4\sqrt n]^2$,
\[
\frac{c}{n}\leq
P\bigl(D_{e_i,e_i'}(W_n)=z\mid\text{the exposed edges}\bigr)
\leq\frac{C\eta_\delta^{-8}}{n}.
\]
The upper bound holds for every $z\in\mathbb Z^2$.
\end{lemma}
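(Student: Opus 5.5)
The plan has three stages: choose the pairs from the vertex variables, expose the edges among their endpoints, and then apply the two-dimensional local law to what remains.

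\emph{Choosing the pairs.} I would run the construction of \cref{lem:many-pairs} inside $D$. This gives $\Theta(n^2)$ ordered pairs $(e,e')$ with $\mathbbm E[D_{e,e'}(W_n)]=O(1)$, each possible edge used at most once. From these I would greedily extract $m=2\lfloor n^{0.6}\rfloor$ pairs with pairwise disjoint vertex supports. Each chosen pair has support of at most four vertices, and each vertex lies in at most $n$ candidate pairs, so one choice excludes at most $4n$ candidates. Since $mn=o(n^2)$, the greedy selection never runs out. This step uses only the vertex variables. I would split the family into two halves of $\lfloor n^{0.6}\rfloor$ pairs each.

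\emph{Prescribed states and conditional independence.} Let $U=\bigcup_i(e_i\cup e_i')$, so $|U|=O(n^{0.6})$, and expose all edges inside $U$. The events $e_i\in E(W_n)$, $e_i'\notin E(W_n)$ are independent across $i$ because the edges are distinct, and each has probability at least $\eta_\delta^2$. A Chernoff bound gives $\Omega(\eta_\delta^2n^{0.6})$ such pairs in each half, failing with probability $e^{-c\eta_\delta^2n^{0.6}}$.

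After conditioning, write $D_{e_i,e_i'}(W_n)$ as an internal part plus an outside part. The internal part is the contribution of triples inside $U$ and is now deterministic. The outside part is a sum over $w\notin U$ of terms depending only on edges from $w$ to the support of pair $i$. Disjoint supports make these edge sets disjoint across $i$, so the changes are conditionally independent.

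\emph{Local estimates.} This is the main obstacle: the internal part must not shift the conditional mean by more than the local-law window. Given vertex variables, the internal part is a sum over the $O(n^{0.6})$ vertices $w\in U$, each contributing a bounded term that depends on $w$'s edges to the support. These terms are independent over $w$. Hoeffding therefore bounds the deviation of the internal part from its mean by $n^{0.4}$ except with probability $e^{-cn^{0.2}}$. A union over the $m$ pairs gives the $n^{0.6}e^{-cn^{0.2}}$ term. Note that $n^{0.4}=o(\sqrt n)$.

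On this event, the conditional mean of $D_{e_i,e_i'}(W_n)$ lies within $O(1)+n^{0.4}$ of $\mathbbm E[D]=O(1)$. The outside part is a sum of $n-O(n^{0.6})$ independent terms. Each term has atoms at least $\eta_\delta^4$ at $(0,0),(1,0),(0,1)$, exactly as in \cref{lem:swap-change}, in both the disjoint and the incident-edge cases. Applying \cref{lem:local} with $\gamma=\eta_\delta^4$ gives the upper bound $C\eta_\delta^{-8}/n$ at every $z$. It also gives the lower bound $c_R/n$ whenever $\|z-\text{mean}\|\le R\eta_\delta^4\sqrt{n}$. Taking $R=6$ covers all $z\in[-4\eta_\delta^4\sqrt n,4\eta_\delta^4\sqrt n]^2$ once $n$ is large, so $c=c_6$ depends only on absolute constants.
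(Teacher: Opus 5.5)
Your proposal is correct and follows the paper's proof essentially step for step. It uses greedy extraction of disjoint pairs from the supply of \cref{lem:many-pairs}, a Chernoff bound for the prescribed states, Hoeffding plus a union bound to keep the exposed contributions (third vertices in the union, plus the bounded within-support triples) within $n^{0.4}$ of their mean, and \cref{lem:local} with $\gamma=\eta_\delta^4$ on the $n-O(n^{0.6})$ outside summands. The only difference is cosmetic: you use that the conditional mean is $o(\eta_\delta^4\sqrt n)$ to take $R=6$, whereas the paper bounds it by $\eta_\delta^4\sqrt n$ and takes $R=10$; both radii cover the box $[-4\eta_\delta^4\sqrt n,4\eta_\delta^4\sqrt n]^2$ for large $n$.
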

\begin{proof}
    By \cref{lem:many-pairs}, there exist $\Theta(n^2)$ candidate pairs with expected change $O(1)$ contained in $D$, chosen before exposing the edge variables. Since no edge appears in more than one pair, each vertex belongs to at most $n-1$ pairs. Choosing one pair and discarding every pair sharing one of its endpoints therefore removes at most $4n$ candidates. As $m=o(n)$, this gives the required $m$ disjoint pairs. The events $e_i\in E(W_n),e_i'\notin E(W_n)$ are independent and each has probability at least $\eta_\delta^2$. A Chernoff bound gives the asserted number in each half, except with probability $O(e^{-c\eta_\delta^2n^{0.6}})$.

    To retain the central local lower bound after exposure, we control the change in conditional means. For each pair, contributions from third vertices in the union of the endpoints consist of $O(n^{0.6})$ independent bounded summands, together with a bounded contribution from triples entirely within its own endpoints. Exposing the edges within the union changes the conditional mean by the centered total of these contributions. Hoeffding's inequality and a union bound over all $m$ pairs therefore give
    \[
    \max_{i\in[m]}\left\|
    \mathbbm{E}[D_{e_i,e_i'}(W_n)\mid\text{the exposed edges}]
    -\mathbbm{E}[D_{e_i,e_i'}(W_n)]
    \right\|_\infty\leq n^{0.4}
    \]
    except with probability $O(n^{0.6}e^{-cn^{0.2}})$. On this event every conditional mean is $O(1)+O(n^{0.4})$, so both coordinates are bounded in absolute value by $\eta_\delta^4\sqrt n$ for sufficiently large $n$.

    Conditional on the exposed edges, the remaining contributions use edges joining each pair's endpoints to vertices outside the union. These sets of edge variables are disjoint for different pairs, so the changes are mutually independent. Each remaining sum has $n-O(n^{0.6})$ independent summands satisfying \cref{lem:local} with $\gamma=\eta_\delta^4$. The conditional-mean bound and \cref{lem:local}, with $R=10$, give the stated lower bound throughout the box and the upper bound everywhere. Intersecting the two events proves the result.
\end{proof}

We now use this conditional independence to obtain all required precise corrections simultaneously.

\begin{lemma}[Precise modifications]\label{lem:precise-swaps}
With probability $1-o(1)$, simultaneously for every $(a,b)\in\mathbb Z^2\cap[-2\eta_\delta^4\sqrt n,2\eta_\delta^4\sqrt n]^2$, two swaps inside $D$ change $(\#\ctriangle[0.2]{white}{white}{white},\#\ctriangle[0.2]{black}{black}{black})$ in $W_n$ by precisely $(a,b)$.
\end{lemma}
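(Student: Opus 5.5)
We build on \cref{lem:approx-independence}. Choose the $m=2\lfloor n^{0.6}\rfloor$ disjoint candidate pairs in $D$ and split them into two halves $\mathcal P_1,\mathcal P_2$. Then expose the edges among the union of their endpoints. On the event of that lemma, each half contains a set of $\ell=\Omega(\eta_\delta^2 n^{0.6})$ usable pairs, meaning pairs with $e_i\in E(W_n)$ and $e_i'\notin E(W_n)$. The usable indices depend only on exposed edges, and conditionally the increments $D_i=D_{e_i,e_i'}(W_n)$ are independent. Each satisfies $P(D_i=z)\ge c/n$ on the box $\mathcal B=\mathbb Z^2\cap[-4\eta_\delta^4\sqrt n,4\eta_\delta^4\sqrt n]^2$ and $P(D_i=z)\le C\eta_\delta^{-8}/n$ everywhere. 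Because supports are pairwise disjoint, performing one swap from each half leaves the other's increment unchanged, since no triple contains altered edges from both supports. The swaps also lie in $D$, so they do not interact with the modifications in $A,B,C$. It therefore suffices to show that, with conditional probability $1-o(1)$, for every target $(a,b)$ in the smaller box $[-2\eta_\delta^4\sqrt n,2\eta_\delta^4\sqrt n]^2$ there are usable $i\in\mathcal P_1$ and $j\in\mathcal P_2$ with $D_i+D_j=(a,b)$.

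The first step shows that the first half produces many distinct values in a central box. Let $\mathcal B'=\mathbb Z^2\cap[-2\eta_\delta^4\sqrt n,2\eta_\delta^4\sqrt n]^2$, which has $\Theta(\eta_\delta^8 n)$ points, and let $S_1=\{D_i: i\in\mathcal P_1\text{ usable}, D_i\in\mathcal B'\}$. Each usable $D_i$ lands in $\mathcal B'$ with probability $\Omega(\eta_\delta^8)$ by the lower bound. The point-mass upper bound $C\eta_\delta^{-8}/n$ makes collisions rare: the expected number of colliding pairs is at most $\ell^2 C\eta_\delta^{-8}/n=O(n^{0.2})=o(\ell)$. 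Since the indicators are independent, Chernoff plus Markov give $|S_1|\ge c'\eta_\delta^{10}\ell$ with probability $1-o(1)$.

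The second step is the main one. Fix $(a,b)$ and condition on the first half. For each usable $j\in\mathcal P_2$, the event $D_j\in(a,b)-S_1$ has probability at least $|S_1|\cdot c/n$, because every point of $(a,b)-S_1$ lies in $\mathcal B$ (the difference of two points of the $2\eta_\delta^4\sqrt n$ box lies in the $4\eta_\delta^4\sqrt n$ box). This probability is $\Omega_\delta(n^{0.6}/n)=\Omega_\delta(n^{-0.4})$. By independence over the $\ell=\Omega(\eta_\delta^2n^{0.6})$ usable $j$, the probability that no $j$ works is at most
\[
\bigl(1-c''n^{-0.4}\bigr)^{c'''n^{0.6}}\le\exp\bigl(-c_\delta n^{0.2}\bigr).
\]
A union bound over the $O_\delta(n)$ targets $(a,b)$ leaves failure probability $O(n\,e^{-c_\delta n^{0.2}})=o(1)$. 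Adding the failure probabilities from \cref{lem:approx-independence} and from the first step gives the claim. Distinct indices automatically give disjoint supports, so the two swaps really do combine additively.

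The main obstacle is getting enough distinct values in $S_1$ so that the second half succeeds uniformly in $(a,b)$. This is where the uniform point-mass upper bound from \cref{lem:local} is essential, and where the exponents $0.6$ versus $0.4$ must be balanced. The size of $S_1$ times the local probability $1/n$, multiplied by the size of the second half, must grow polynomially, here like $n^{0.2}$, so that it beats the union bound over the $\Theta(n)$ targets. I would double-check that the exposure step's conditional-mean shift of $n^{0.4}$ really keeps the sum box inside the region where the lower bound $c/n$ of \cref{lem:approx-independence} applies. That lemma states the bound on all of $\mathcal B$, so this should be automatic.
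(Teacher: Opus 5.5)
Your proposal is correct and follows the paper's proof essentially step for step. Both arguments split the family into two halves after exposing the edges among the endpoints. The first half then supplies $\Omega_\delta(n^{0.6})$ distinct increments in the $2\eta_\delta^4\sqrt n$ box. Each usable second-half pair hits one of the distinct complements with probability $\Omega_\delta(n^{-0.4})$, and a union bound over the $O(n)$ targets finishes. The only difference is minor: you control repeated values through the expected number of colliding pairs and Markov's inequality (failure probability $O_\delta(n^{-0.1})$), whereas the paper uses a union bound to rule out any value occurring ten times (failure probability $O(\eta_\delta^{-80}n^{-3})$); either suffices for the $1-o(1)$ claim.
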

\begin{proof}
     Fix $(a,b)\in\mathbb Z^2\cap[-2\eta_\delta^4\sqrt n,2\eta_\delta^4\sqrt n]^2$. Expose the edges specified in \cref{lem:approx-independence} and condition on a realization satisfying its conclusion. Retain the first-half pairs with $e_i\in E(W_n),e_i'\notin E(W_n)$. Each has probability $\Omega(\eta_\delta^8)$ of inducing a change in $[-2\eta_\delta^4\sqrt n,2\eta_\delta^4\sqrt n]^2$, by summing the local lower bound over the lattice points in this box. Conditional independence and a Chernoff bound therefore give $\Omega(\eta_\delta^{10}n^{0.6})$ such pairs, except with probability $O(e^{-c\eta_\delta^{10}n^{0.6}})$.

     The local upper bound also controls multiplicities. For any ten first-half pairs and any fixed lattice value, the probability that all ten induce that value is at most $(C\eta_\delta^{-8}/n)^{10}$. A union bound over these sets and the lattice points in the box gives probability $O(\eta_\delta^{-80}n^{-3})$ that any value occurs ten times. Otherwise each value occurs at most nine times, so the first half supplies $\Omega(\eta_\delta^{10}n^{0.6})$ distinct changes in the box.

    Expose all first-half changes and fix a realization with this property. The increments completing the distinct first-half changes to $(a,b)$ are distinct lattice points in $[-4\eta_\delta^4\sqrt n,4\eta_\delta^4\sqrt n]^2$. By conditional independence, the second-half increments retain their distributions and remain independent after this exposure. Summing the local lower bound over the required complements shows that each second-half pair with $e_i\in E(W_n),e_i'\notin E(W_n)$ succeeds with probability $\Omega(\eta_\delta^{10}n^{-0.4})$. Since there are $\Omega(\eta_\delta^2n^{0.6})$ such pairs, their joint failure probability is at most
    \[
    \bigl(1-c\eta_\delta^{10}n^{-0.4}\bigr)^{c\eta_\delta^2n^{0.6}}
    \leq e^{-c\eta_\delta^{12}n^{0.2}},
    \]
    after adjusting the absolute constant $c$.
    The successful first-half event is common to all targets. Union bounding over the $O(n)$ integer values of $(a,b)$, and including the preceding exceptional events, gives total failure probability
    \[
    O\bigl(\eta_\delta^{-80}n^{-3}+e^{-c\eta_\delta^{10}n^{0.6}}
    +n^{0.6}e^{-cn^{0.2}}+ne^{-c\eta_\delta^{12}n^{0.2}}\bigr)=o(1).
    \]
    Finally, the two selected pairs have disjoint endpoints, so no vertex triplet contains an altered edge from both swaps. Their changes therefore add exactly, proving the result.
\end{proof}

\section{Proof of the main theorem}\label{sec:proof}
\begin{proof}[Proof of Theorem~\ref{thm:main}]
Necessity follows from the identities between induced counts. For sufficiency, fix $\delta,\mathcal{H},v$ satisfying the hypotheses and suppose an arithmetic completion exists. Lemma~\ref{lem:s3-representatives} gives a graphon $W=F_\eta U$ with $\mathbbm{E}[\#H(W_{n})]=\binom{n}{v(H)}v_{H}$ for all $H\in\mathcal{H}$, satisfying the assumptions of \cref{lem:first,lem:edge-correction,lem:many-swaps,lem:precise-swaps}. For sufficiently large $n$, depending only on $\delta$, \cref{lem:s3-arithmetic} supplies admissible integer targets agreeing with every prescribed count, within $2n$ of the expected $3$-vertex counts and within $24$ of the expected edge count.

Sample $W_n$ with the fixed partition into $A,B,C,D$. The event in \cref{lem:first} has probability at least $9/10$, and those in \cref{lem:edge-correction,lem:many-swaps,lem:precise-swaps} each have probability $1-o(1)$. The latter estimates are uniform in the vertex variables, so a union bound gives a common event of probability at least $9/10-o(1)>0$. Work on this event. The later corrections are available simultaneously for all required residuals, including those determined by earlier stages. Modifications inside different parts leave one another's increments unchanged, since no triple contains edges lying inside two different parts.

First apply \cref{lem:first} inside $A$, obtaining $W_n'$ with every induced $3$-vertex count within $O_\delta(n)$ of its expectation in $W_n$ and hence, by \cref{lem:s3-arithmetic}, within $O_\delta(n)$ of its integer target. Comparing the weighted count identity for $W_n'$ with the target identity shows that its edge count is within $O_\delta(1)$ of its target.

Next apply \cref{lem:edge-correction} inside $B$, changing $O_\delta(1)$ edges to obtain $W_n''$ with exactly the target edge count. Each induced $3$-vertex count changes by $O_\delta(n)$ and is therefore within $O_\delta(n)$ of its integer target.

Apply \cref{lem:many-swaps} inside $C$, using $O_\delta(\sqrt n)$ swaps to obtain $W_n'''$. The edge count remains fixed, and each of $(\#\ctriangle[0.2]{white}{white}{white}(W_n'''),\#\ctriangle[0.2]{black}{black}{black}(W_n'''))$ is within $2\eta_\delta^4\sqrt n$ of its target.

Finally, \cref{lem:precise-swaps} supplies two swaps inside $D$ that remove the remaining residual, producing $W_n''''$ with the same edge count and with $(\#\ctriangle[0.2]{white}{white}{white}(W_n''''),\#\ctriangle[0.2]{black}{black}{black}(W_n''''))$ exactly at their targets.

The resulting graph and the chosen targets both satisfy the identities
\[
(n-2)\#\cedge[0.2]{black}=\#\ctriangle[0.3]{black}{white}{white}+2\#\ctriangle[0.3]{white}{black}{black}+3\#\ctriangle[0.3]{black}{black}{black},\qquad \#\ctriangle[0.3]{white}{white}{white}+\#\ctriangle[0.3]{black}{white}{white}+\#\ctriangle[0.3]{white}{black}{black}+\#\ctriangle[0.3]{black}{black}{black}=\binom{n}{3}.
\]
Since the edge, empty-triple and triangle counts agree with their targets, these identities force the remaining two induced counts to agree as well. Thus every prescribed count is realized.

The thresholds in the arithmetic and first-order corrections are polynomial in $\eta_\delta^{-1}$. By \cref{lem:local}, the swap estimates on the scale $\eta_\delta^4\sqrt n$ also have polynomial thresholds and probability constants bounded by powers of $\eta_\delta$. The finite bounds used in the coarse and precise corrections therefore require only a polynomial lower bound on $n$ in terms of $\eta_\delta^{-1}$. Since $\eta_\delta=\Theta(\delta)$, the threshold in the theorem may be chosen polynomial in $\delta^{-1}$.
\end{proof}

\bibliographystyle{alpha}
\bibliography{Exact-Realization}

@article{janson1992probabilistic,
  title={Probabilistic construction of proportional graphs},
  author={Janson, Svante and Spencer, Joel},
  journal={Random Structures \& Algorithms},
  volume={3},
  number={2},
  pages={127--137},
  year={1992},
  publisher={Wiley Online Library}
}

@article{huang20143,
  title={On the 3-local profiles of graphs},
  author={Huang, Hao and Linial, Nati and Naves, Humberto and Peled, Yuval and Sudakov, Benny},
  journal={Journal of Graph Theory},
  volume={76},
  number={3},
  pages={236--248},
  year={2014},
  publisher={Wiley Online Library}
}

@article{sah2024local,
  title={Local limit theorem for joint subgraph counts},
  author={Sah, Ashwin and Sawhney, Mehtaab and Zhu, Daniel G},
  journal={arXiv preprint arXiv:2412.09535},
  year={2024}
}

@article{lovasz2006limits,
  title={Limits of dense graph sequences},
  author={Lov{\'a}sz, L{\'a}szl{\'o} and Szegedy, Bal{\'a}zs},
  journal={Journal of Combinatorial Theory, Series B},
  volume={96},
  number={6},
  pages={933--957},
  year={2006},
  publisher={Elsevier}
}

@article{glebov2016densities,
  title={Densities of 3-vertex graphs},
  author={Glebov, Roman and Grzesik, Andrzej and Hu, Ping and Hubai, Tamas and Kral, Daniel and Volec, Jan},
  journal={arXiv preprint arXiv:1610.02446},
  year={2016}
}

@article{janson1991asymptotic,
  title={The asymptotic distributions of generalized U-statistics with applications to random graphs},
  author={Janson, Svante and Nowicki, Krzysztof},
  journal={Probability theory and related fields},
  volume={90},
  number={3},
  pages={341--375},
  year={1991},
  publisher={Springer}
}

@article{zhang2022berry,
  title={Berry--Esseen bounds for generalized U-statistics},
  author={Zhang, Zhuo-Song},
  journal={Electronic Journal of Probability},
  volume={27},
  pages={1--36},
  year={2022},
  publisher={The Institute of Mathematical Statistics and the Bernoulli Society}
}

@article{kittipassorn2025many,
  title={How many triangles can a graph have?},
  author={Kittipassorn, Teeradej and Popielarz, Kamil},
  journal={Bulletin of the Australian Mathematical Society},
  volume={111},
  number={3},
  pages={405--415},
  year={2025},
  publisher={Cambridge University Press}
}

@book{lovasz2012large,
  title={Large networks and graph limits},
  author={Lov{\'a}sz, L{\'a}szl{\'o}},
  volume={60},
  year={2012},
  publisher={American Mathematical Soc.}
}

@article{radin2013phase,
  title={Phase transitions in a complex network},
  author={Radin, Charles and Sadun, Lorenzo},
  journal={Journal of Physics A: Mathematical and Theoretical},
  volume={46},
  number={30},
  pages={305002},
  year={2013},
  publisher={IOP Publishing}
}

@article{bhattacharya2023fluctuations,
  title={Fluctuations of subgraph counts in graphon based random graphs},
  author={Bhattacharya, Bhaswar B and Chatterjee, Anirban and Janson, Svante},
  journal={Combinatorics, Probability and Computing},
  volume={32},
  number={3},
  pages={428--464},
  year={2023},
  publisher={Cambridge University Press}
}

@article{razborov2008minimal,
  title={On the minimal density of triangles in graphs},
  author={Razborov, Alexander A},
  journal={Combinatorics, Probability and Computing},
  volume={17},
  number={4},
  pages={603--618},
  year={2008},
  publisher={Cambridge University Press}
}

\end{document}